\newcolumntype{C}[1]{>{\centering\arraybackslash$}p{#1}<{$}}
\newtheorem{theorem}{Theorem}[section]
\newtheorem{lemma}[theorem]{Lemma}
\newtheorem{corollary}[theorem]{Corollary}
\newtheorem{definition}{Definition}[section]
\theoremstyle{remark}
\newtheorem{remark}[theorem]{Remark}
\newtheorem{example}[theorem]{Example}
\begin{document}

\title{Simultaneous diagonalization of conics in $PG(2,q)$}

\author{Katharina Kusejko (ETH Z\"urich)}

\date{}

\maketitle

\begin{abstract}
Consider two symmetric $3 \times 3$ matrices $A$ and $B$ with entries in $GF(q)$, for $q=p^n$, $p$ an odd prime. The zero sets of $v^T Av$ and $v^T Bv$, for $v \in GF(q)^3$ and $v\neq 0$, can be viewed as (possibly degenerate) conics in $PG(2,q)$, the Desarguesian plane of order $q$. Using combinatorial properties of pencils of conics in $PG(2,q)$, we are able to tell when it is possible to find a regular matrix $S$ with entries in $GF(q)$, such that $S^T A S$ and $S^T BS$ are both diagonal matrices. This is equivalent to the existence of a collineation, which maps two given conics into two conics in diagonal form. For two proper conics, we will in particular compare the situation in $PG(2,q)$ to the real projective plane and compare the geometrical properties of being diagonalizable with our combinatorial results. 
\end{abstract}

\qquad \textbf{Keywords:} Finite projective planes, Pencils of conics, Simultaneous diagonalization

\qquad \textbf{Mathematics Subject Classification:} 51B25, 51E20, 51E15

\section{Introduction}
\label{intro}

A well-known problem in linear algebra is that of finding for two given Hermitian (symmetric) $3 \times 3$ matrices $A$ and $B$ over the complex (real) numbers a matrix $S$, such that $S^*AS$ and $S^*BS$ are both in diagonal form, where $S^*$ is the conjugate transpose of $S$ (see for example \cite{MR0094690} or \cite{MR818902}). If such a matrix $S$ can be found, $A$ and $B$ are said to be simultaneously diagonalizable. In this paper, we show that the situation over finite fields is for some cases rather different to the situation over the real or complex numbers. For example, if $A$ and $B$ are two real symmetric matrices, a sufficient but not necessary condition for $A$ and $B$ being simultaneously diagonalizable is that $v^TAv$ and $v^TBv$ have no non-trivial common zeros, where $v$ is a column vector. When viewing $v^TAv$ and $v^TBv$ as polynomials which define conics in the real projective plane, the condition above would be equivalent with the conics being disjoint. We will see that for finite projective Desarguesian planes, not all pairs of disjoint conics are simultaneously diagonalizable. In particular, we obtain the following condition for two proper disjoint conics in $PG(2,q)$ being simultaneously diagonalizable:

\begin{theorem} Two proper disjoint conics $C$ and $\tilde{C}$ in $PG(2,q)$ are simultaneously diagonalizable if and only if their pencil $\mathcal{P}(C,\tilde{C})$ leads to a partition of the form $\{P,g,C_1,...,C_{q-1}\}$ or $\{P,\tilde{P},g\tilde{g},C_1,...,C_{q-2}\}$, where $P$, $\tilde{P}$ are points, $g$ is a line, $g\tilde{g}$ a pair of lines and $C_i$ proper conics.
\end{theorem}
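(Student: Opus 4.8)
The plan is to convert simultaneous diagonalizability into a purely spectral condition on the matrix $M = B^{-1}A$ and then read that condition off from the degenerate members of the pencil. Since $\tilde C$ is proper, $B$ is nonsingular, and from $A^T = A$, $B^T = B$ one gets $M^T B = B M$, so $M$ is self-adjoint for the nondegenerate symmetric form $\langle x,y\rangle = x^T B y$. I would first prove the reduction: $A$ and $B$ are simultaneously diagonalizable by congruence if and only if $M$ is diagonalizable over $GF(q)$. For the forward direction, a common diagonalizing $S$ exhibits a $B$-orthogonal eigenbasis of $M$; for the converse, eigenvectors of $M$ for distinct eigenvalues are automatically $B$-orthogonal, and on a repeated eigenspace the restriction of $B$ stays nondegenerate, so (using that $q$ is odd) an orthogonal basis exists there. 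In the resulting basis $B$ is diagonal and, because $A = BM$ with $M$ diagonal, $A$ is diagonal as well. Geometrically this says that $C$ and $\tilde C$ are simultaneously diagonalizable exactly when they admit a common self-polar triangle, whose vertices are the eigenvectors of $M$.

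Next I would set up the dictionary between the spectrum of $M$ and the pencil $\mathcal{P}(C,\tilde C) = \{A - \lambda B\}$. Because $\det(A - \lambda B) = \det(B)\det(M - \lambda I)$ and $\ker(A-\lambda B) = \ker(M - \lambda I)$, the degenerate members correspond precisely to the roots of the characteristic cubic of $M$ lying in $GF(q)$, and the rank of the member at such a $\lambda$ equals $3$ minus the geometric multiplicity of $\lambda$. Hence a member is a double line exactly when its eigenvalue has geometric multiplicity two, and is a point or a line pair (both rank $2$) exactly when the multiplicity is one; in every case the eigenvector spanning the kernel is the vertex of the degenerate conic. Combining this with the first step, $M$ is diagonalizable over $GF(q)$ precisely when its cubic splits into linear factors over $GF(q)$ with square-free minimal polynomial, which leaves exactly two admissible spectral shapes: three distinct rational eigenvalues (three rank-$2$ degenerate members), or one simple and one geometric-multiplicity-two eigenvalue (one rank-$2$ member together with one double line). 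The scalar case $M = aI$ is excluded, since it forces $C = \tilde C$.

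The disjointness hypothesis then fixes the remaining point-versus-line-pair ambiguity by counting. As $C \cap \tilde C = \emptyset$ the pencil has no base points, so every one of the $q^2+q+1$ points lies on exactly one of its $q+1$ members, giving a partition. Writing the degenerate members as above and the rest as proper conics (each with $q+1$ points, a point contributing $1$, a double line $q+1$, a line pair $2q+1$), I would solve the resulting counting identity. In the three-rank-$2$ shape there are $q-2$ proper conics, and balancing the point total forces exactly one line pair and two points, i.e. the partition $\{P,\tilde P, g\tilde g, C_1,\dots,C_{q-2}\}$; in the double-line shape there are $q-1$ proper conics, and the identity is satisfied only if the remaining rank-$2$ member is a point, giving $\{P,g,C_1,\dots,C_{q-1}\}$ (a line pair here would overshoot the total by $2q$). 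Conversely, each of the two listed partitions exhibits exactly the corresponding spectral shape — a double line forces geometric multiplicity two and a point or line pair a simple eigenvalue — so $M$ is diagonalizable and the two conics are simultaneously diagonalizable.

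The main obstacle I anticipate is twofold. First, the repeated-eigenvalue case of the reduction must be handled with care: one has to verify that $B$ stays nondegenerate on the two-dimensional eigenspace and that an orthogonal basis exists there, which is where the hypothesis that $q$ is odd is genuinely used. Second, and more delicate, is justifying that the counting identity not only disambiguates the two diagonalizable shapes but also rules out the non-diagonalizable spectra for disjoint conics; indeed the shapes coming from a Jordan block or from a quadratic irreducible factor either fail the integrality of the point count under the partition constraint or produce partitions outside the two forms in the statement, so that the combinatorial description of $\mathcal{P}(C,\tilde C)$ is equivalent to diagonalizability rather than merely implied by it.
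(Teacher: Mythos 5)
Your proposal is correct, but it takes a genuinely different route from the paper. The paper works entirely inside the pencil: for the ``only if'' direction it writes down the pencil of two diagonal forms $x^2+b_iy^2+c_iz^2$ and observes that $\mathbb{V}(E_1-E_2)$ together with $\mathbb{V}(E_1-\tfrac{b_1}{b_2}E_2)$ or $\mathbb{V}(E_1-\tfrac{c_1}{c_2}E_2)$ already yield at least two degenerate members, so the pencil must be in $(q-1)$- or $(q-2)$-form; for the ``if'' direction it normalizes the degenerate members by explicit collineations (the point to $[1,0,0]^T$, the line to $\mathbb{V}(x^2)$, the line pair to $\mathbb{V}(x^2-y^2)$) and then exhibits a concrete matrix $S$ that puts the whole pencil in diagonal form. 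You instead reduce the question to the spectral theory of the $B$-self-adjoint operator $M=B^{-1}A$: simultaneous diagonalization by congruence is equivalent to $M$ being diagonalizable over $GF(q)$, the degenerate members are read off from the eigenvalues via the identity $\det(A-\lambda B)=\det(B)\det(M-\lambda I)$ and the rank formula $3-\dim\ker(M-\lambda I)$, and disjointness plus the count $q^2+q+1$ decides point versus line pair. Both arguments are complete. Yours is more conceptual: it isolates the controlling invariant (a split, squarefree minimal polynomial of $B^{-1}A$, equivalently a common self-polar triangle), makes the classification of pencil shapes and the diagonalization criterion two faces of one computation, and avoids the separate treatment of $q=3$ that the paper mentions. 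The paper's argument buys constructiveness (it produces the diagonalizing collineation explicitly) and, in the form stated there as Theorem \ref{DiagDisj}, also covers disjoint \emph{degenerate} conics $V_1,V_2$, which your reduction cannot reach directly since it needs $B$ invertible --- harmless here, as the statement at hand assumes both conics proper. One last remark: the worry you raise at the end about ruling out non-diagonalizable spectra is already settled by your own converse step, since either of the two listed partitions forces the geometric multiplicities to sum to three and hence forces $M$ to be diagonalizable; no additional argument is needed.
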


We are therefore interested in geometric conditions, such that the pencil of two proper disjoint conics is of the form $\{ P,g,C_1,...,C_{q-1} \}$ or $\{ P,\tilde{P},g\tilde{g},C_1,...,C_{q-2} \}$. The main result concerning this question is the following theorem about conics $C$ and $\tilde{C}$ in nested position, i.e.\ all points of $C$ are external points of $\tilde{C}$ or all points of $C$ are internal points of $\tilde{C}$ and vice versa.

\begin{theorem} Consider two proper disjoint conics $C$ and $\tilde{C}$ in $PG(2,q)$. The pencil $\mathcal{P}(C,\tilde{C})$ is of the form $\{P,g,C_1,...,C_{q-1}\}$ if and only if all pairs of proper conics lie in nested position.
\end{theorem}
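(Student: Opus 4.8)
The plan is to reduce the geometric notion of nested position to a single quadratic-character condition and then to read that condition off from the configuration of the four base points of the pencil. Write $A,B$ for symmetric matrices representing $C,\tilde C$, let $\chi$ be the quadratic character of $GF(q)$, and recall the standard criterion that a point $x$ is external to a proper conic with matrix $M$ exactly when $(x^{T}Mx)\det M$ is a non-square and internal exactly when it is a nonzero square. For two disjoint proper members $C_i,C_j$ of $\mathcal P(C,\tilde C)$, with matrices $M_i,M_j$, the quantity $\det M_i$ is fixed once $i$ is fixed, so all points of $C_j$ lie on the same side of $C_i$ if and only if the character $\chi(x^{T}M_ix)$ is constant as $x$ runs over $C_j$. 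This is the first step I would carry out, turning ``nested'' into ``$\chi(x^{T}M_ix)$ is constant on $C_j$''.

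To make this computable I would parametrise $C_j$ rationally by $(t:s)\in PG(1,q)$ through a map $x=x(t,s)$ with quadratic coordinate entries, so that $g_{ij}(t,s)=x(t,s)^{T}M_i\,x(t,s)$ is a binary quartic. Its zeros over the algebraic closure are precisely the four points $C_i\cap C_j$, and for any two distinct members of a pencil this intersection equals the common base locus $C\cap\tilde C$. Because $C$ and $\tilde C$ are disjoint, $g_{ij}$ has no zero in $PG(1,q)$, and nested position for the pair $C_i,C_j$ becomes the statement that $\chi(g_{ij}(t,s))$ is constant on $PG(1,q)$, that is, that $g_{ij}$ is a nonzero constant times a perfect square of a binary form. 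Since the four base points are an invariant of the whole pencil, this square condition does not depend on the chosen pair, and it is symmetric in $i,j$; hence the ``and vice versa'' in the definition is automatic, and nested position is an all-or-nothing property of $\mathcal P(C,\tilde C)$.

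It then remains to match the square versus non-square dichotomy with the partition type, which I would do via the Frobenius action on the four base points. Disjointness forbids a rational base point, leaving exactly three orbit patterns: a single orbit of length four, two conjugate pairs, and a conjugate pair of double points (double contact). The three degenerate members of the pencil correspond to the three splittings of the four base points into two pairs of joining lines, where a pair of conjugate lines meeting in a rational point yields a point-conic $P$, a pair of rational lines yields a line pair $g\tilde g$, and a coincident pair yields a double line $g$. Computing the induced action on these three splittings then gives the decisive table: the length-four orbit fixes exactly one splitting and produces a single rational degenerate member, a point, whence the partition $\{P,C_1,\dots,C_q\}$ with $g_{ij}$ irreducible of degree four; two conjugate pairs fix all three splittings and produce two point-conics and one line pair, whence $\{P,\tilde P,g\tilde g,C_1,\dots,C_{q-2}\}$ with $g_{ij}$ a product of two distinct irreducible quadratics; and double contact produces one point-conic together with one double line, whence exactly $\{P,g,C_1,\dots,C_{q-1}\}$ with $g_{ij}$ a constant times the square of an irreducible quadratic. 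Comparing the last columns finishes the argument: all pairs are nested, i.e.\ $g_{ij}$ is a constant times a square, precisely when the base points are in double contact, precisely when $\mathcal P(C,\tilde C)$ has the form $\{P,g,C_1,\dots,C_{q-1}\}$.

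I expect the main obstacle to be the character lemma on which the first two paragraphs rest: a binary quartic with no zero in $PG(1,q)$ takes a constant quadratic character if and only if it is a constant times a square. The forward direction is immediate, but excluding constant character in the two non-nested cases, where $g_{ij}$ is either irreducible or a product of two distinct irreducible quadratics, genuinely uses finiteness and contradicts the intuition inherited from the ordered real picture, where disjoint conics are always ``nested or side by side'' with no further subtlety. I would settle it either by a quadratic-character sum estimate or, in the combinatorial spirit of the paper, by producing explicit parameters on which $\chi(g_{ij})$ takes both values from the known factorisation of $g_{ij}$. A second point to verify with care is the exact link between the multiplicity of a root of $\det(\lambda A+\mu B)$ and the rank, hence the geometric type, of the corresponding degenerate conic, since this is what guarantees that double contact delivers a rank-one double line rather than a second line pair.
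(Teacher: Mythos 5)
Your proposal is correct in outline but proceeds by a genuinely different route from the paper. You classify the scheme-theoretic base locus of the pencil under the Frobenius action (a single orbit of length four, two conjugate pairs, or a conjugate pair of double points), match these with the three partition shapes via the degenerate members, and translate nestedness into the constancy of $\chi(x^TM_ix)$ along a rational parametrization of $C_j$, i.e.\ into the binary quartic $g_{ij}$ being a constant times a square. The paper does neither of these things. Its forward implication (all pairs nested $\Rightarrow$ $(q-1)$-form) is a short parity count: the $\frac{q(q+1)}{2}$ external points of $C$ would have to decompose as $k(q+1)+i$ with $i$ determined by the degenerate members, which is impossible for the $q$-form and the $(q-2)$-form when $q$ is odd. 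Its backward implication first invokes Theorem \ref{DiagDisj} to put the whole $(q-1)$-form pencil into diagonal shape and then computes a discriminant explicitly, showing that the secant/external-line alternative is independent of the chosen point of $C_i$ --- which is precisely a concrete instance of your ``$g_{ij}$ is a constant times a square, hence $\chi(g_{ij})$ is constant'' observation. Your approach buys a uniform explanation of all three pencil shapes and makes the all-or-nothing nature of nestedness within a pencil transparent; the paper's counting argument is more elementary and entirely self-contained.

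The one real obligation you leave open is the converse half of your character lemma: if $g_{ij}$ is an irreducible quartic or a product of two distinct irreducible quadratics, then $\chi(g_{ij})$ is not constant on $PG(1,q)$. This is true, but the estimate you propose needs care: the crude Weil bound $\left|\sum_t\chi(g(t))\right|\le 3\sqrt q$ for a separable quartic does not beat $q+1$ when $q=3$ or $q=5$, so you need the sharper genus-one (Hasse) bound $2\sqrt q$ for the curve $y^2=g(t)$, or a direct computation, to cover all odd $q$. Two smaller points: your stated external-point criterion (``$(x^TMx)\det M$ a non-square'') is only correct up to a sign depending on $\chi(-1)$ --- harmless here, since your argument only uses that externality with respect to $C_i$ is governed by $\chi(x^TM_ix)$, but it should be stated correctly; and the rank computation you flag (the double root of $\det(\lambda A+\mu B)$ at the chord of contact yielding a rank-one double line, so that the $(q-1)$-form occurs exactly in the double-contact case) does need to be carried out, as it is what anchors your table to the partition shapes. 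It is routine, but it is not free.
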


For two proper conics which do intersect in exactly one or exactly three points, we will see that simultaneous diagonalization is never possible. Two proper conics which intersect in exactly four points, however, can always be diagonalized simultaneously. For two proper conics which intersect in exactly two points, the property of being simultaneously diagonalizable depends again on the form of their pencil.

We start by reminding the reader of the most important properties of finite projective planes and their collineations. Moreover, we make some combinatorial considerations used later on in this paper. After that, we will describe a partition of the plane $PG(2,q)$ by considering the pencil of conics defined by two given conics and think about its possible shapes. In the main part, we will then use these pencils to answer the question about simultaneous diagonalization of two given conics. Moreover, we mention some connections to geometrical properties of simultaneously diagonalizable conics. In addition, we will compare our findings to the situation in the real projective plane.

\section{Preliminaries}

\subsection{The projective plane $PG(2,q)$}

We recall briefly the most important facts and notations about finite fields (see \cite{MR1429394}) and finite projective planes (see \cite{MR1612570}).

Let $GF(q)$ be the Galois field of order $q=p^n$ for $p$ an odd prime, i.e.\
$$ GF(q) = \{0,1,\alpha,\alpha^2,\alpha^3,...,\alpha^{q-2}\},$$
for a primitive element $\alpha$ of $GF(q)$.

The finite projective Desarguesian plane constructed over $GF(q)$ is denoted by $PG(2,q)$. The points of $PG(2,q)$ are given by non-zero column vectors $[x,y,z]^T$ for $x,y,z \in GF(q)$, where $[\lambda x,\lambda y,\lambda z] = [x,y,z]$ for all $\lambda \in GF(q)\setminus \{0\}$. Similarly, all lines are denoted by row vectors $[x,y,z]$.

The main idea for the simultaneous diagonalization of two symmetric matrices $A$ and $B$ with entries in $GF(q)$ is to consider the zeros of $v^T A v$ and $v^T B v$, for $v$ a point in $PG(2,q)$. Such a zero set corresponds to a line, a point, a pair of lines or a proper conic in $PG(2,q)$. 

All points, lines, pairs of lines and proper conics in $PG(2,q)$ can be described as sets of the form
\begin{equation} \label{QuForm} V:=\mathbb{V}(F(x,y,z))=\left\{ [x,y,z]^T \in PG(2,q) \big \vert  F(x,y,z)=0 \right\}
\end{equation}
for $ F(x,y,z)=ax^2+by^2+cz^2+dxy+exz+fyz$ with $a,b,c,d,e,f \in GF(q)$, where at least one of the coefficients is different from zero. We call the sets defined by (\ref{QuForm}) conics, and talk about proper conics when referring to sets of $q+1$ points, no three of them collinear. If the set $V$ corresponds to a point, a line or a pair of lines in $PG(2,q)$, we refer to these sets as degenerate conics.

Another  way to look at  (\ref{QuForm}) is to consider the matrix representation 
\begin{equation} \label{MatrixRep}
v^T A v
\end{equation}
for $v=[x,y,z]^T$ and
\begin{equation} \label{Cmatrix}
A= \left(\begin{array}{*3{C{1.5em}}} 2a & d & e \\ d & 2b & f \\ e & f & 2c \end{array}\right).
\end{equation}

Then the set $V$ defined by (\ref{QuForm}) corresponds to a proper conic if and only if the
corresponding matrix $A$ is  regular. If  $A$
corresponds  to a  singular matrix  and (\ref{MatrixRep}) is irreducible, the set $V=\mathbb{V}(v^TAv)$
is one  point  only.  Otherwise,  if (\ref{MatrixRep}) splits into  one or two linear factors, it  corresponds to one or  two lines. Conics whose matrix representation is singular are called {\em degenerate} or {\em non-proper} conics.

If $C$ is  a given proper conic, a line $l$ intersects $C$ in at most two points. 
We call $l$ a {\em tangent}, if it intersects $C$ in exactly one point, 
a  {\em secant}, if it intersects $C$ in exactly two points and  an {\em external line}  if it misses $C$. 
In finite projective planes of odd order, there is either none or exactly two tangents of $C$ through a point off $C$. 
A point $P$ is called
{\em internal point} if there is no tangent to $C$ through $P$ and {\em
  external point} if  there are two tangents to $C$ through $P$. 
  Recall that $(AP)^T$ gives the polar line of $P$ with respect to $C$, 
  where $A$ is the matrix representation (\ref{Cmatrix}) of the conic $C$. 
  In particular, $(AP)^T$ is a tangent if and only if $P$ is on $C$, 
  it is a secant if and only if $P$ is an external point and an external line otherwise.

An important tool are collinear maps of $PG(2,q)$:
\begin{lemma} If $S$ is a regular $3\times 3$ matrix with coefficients
  in $GF(q)$, then $\phi_S:  PG(2,q) \rightarrow PG(2,q),\ P \mapsto
  SP$ is bijective and collinear, i.e.\, a set of collinear points is mapped to a set
  of collinear points.
\end{lemma}

The most important result concerning collineations we are going to use is known as the fundamental theorem of projective geometry, see \cite{MR1612570}*{Section 2.1}.

\begin{lemma} \label{quadrilateral}
Let $\left\{P_1,P_2,P_3,P_4\right\}$ and $\left\{Q_1,Q_2,Q_3,Q_4\right\}$ be sets of points in $PG(2,q)$, such that no three points taken from the same set are collinear. Then there exists a collineation $\phi_S$ such that $\phi_S(P_i)=Q_i$ for $i=1,2,3,4$.
\end{lemma}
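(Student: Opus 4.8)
The plan is to reduce the statement to the special case of mapping one fixed reference frame onto an arbitrary quadrilateral, and then to compose two such maps. Fix the standard frame $E_1 = [1,0,0]^T$, $E_2 = [0,1,0]^T$, $E_3 = [0,0,1]^T$ together with the unit point $U = [1,1,1]^T$; these four points clearly satisfy the no-three-collinear condition. It suffices to prove the following claim: for any quadrilateral $\{P_1,P_2,P_3,P_4\}$ with no three points collinear there is a regular matrix $S$ with $\phi_S(E_i) = P_i$ for $i = 1,2,3$ and $\phi_S(U) = P_4$. Granting the claim, I obtain regular matrices $S$ and $T$ carrying the standard frame to $\{P_i\}$ and $\{Q_i\}$ respectively, and then $\phi_{TS^{-1}} = \phi_T \circ \phi_S^{-1}$ sends $P_i$ to $Q_i$ for all four indices. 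Since $TS^{-1}$ is again regular, the first lemma on regular matrices guarantees that it induces a collineation, which is exactly the $\phi_S$ required by the statement.

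To prove the claim, first choose column-vector representatives $v_1, v_2, v_3$ of $P_1, P_2, P_3$. Because no three of the four points are collinear, in particular $P_1, P_2, P_3$ are not collinear, so $v_1, v_2, v_3$ are linearly independent and the matrix $M = [\,v_1 \mid v_2 \mid v_3\,]$ is regular. Then $\phi_M(E_i) = P_i$ already holds, but $\phi_M(U) = v_1 + v_2 + v_3$ need not represent $P_4$. To correct the image of the unit point I rescale the columns: replacing $v_i$ by $\lambda_i v_i$ for nonzero scalars $\lambda_i$ leaves the images of the $E_i$ unchanged as projective points, while sending $U$ to $\lambda_1 v_1 + \lambda_2 v_2 + \lambda_3 v_3$.

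The decisive step is to choose the $\lambda_i$ correctly. Writing a representative $v_4$ of $P_4$ in the basis $v_1, v_2, v_3$ gives $v_4 = c_1 v_1 + c_2 v_2 + c_3 v_3$ with uniquely determined coefficients. Here the full strength of the hypothesis enters: if some $c_i$ vanished, say $c_3 = 0$, then $P_4$ would lie in the span of $v_1$ and $v_2$, forcing $P_1, P_2, P_4$ to be collinear and contradicting the general-position assumption. Hence all $c_i$ are nonzero, and setting $\lambda_i = c_i$ produces a regular matrix $S = [\,c_1 v_1 \mid c_2 v_2 \mid c_3 v_3\,]$ with $\phi_S(U) = v_4$, that is $\phi_S(U) = P_4$, establishing the claim. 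I expect the only real obstacle to be this non-vanishing of the coordinates $c_i$; once it is in place the construction is forced, and the remaining verifications are routine linear algebra over $GF(q)$.
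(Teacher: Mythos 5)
Your proof is correct: the reduction to the standard frame $E_1,E_2,E_3,U$, the column-rescaling by the coordinates $c_i$ of $P_4$, and the observation that each $c_i\neq 0$ precisely because no three of the $P_i$ are collinear together constitute the standard argument for the fundamental theorem of projective geometry in $PG(2,q)$. The paper itself gives no proof of this lemma, citing Hirschfeld instead, and your argument is exactly the one found in that reference, so there is nothing further to reconcile.
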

 
\subsection{Some combinatorial considerations}

In this section, we count the number of proper conics, which intersect in a particular number of points and use this to count pairs of proper conics which have a certain number of points in common. The numbers obtained in this section will be used later on to count the number of pencils of conics in a particular shape. First, we deduce how many proper conics there are through a given number of points.

\begin{lemma} \label{CountPoints} In $PG(2,q)$, there are $(q-2)$ proper conics through four given points, $(q-1)^2$ proper conics through three given points, $q^2(q-1)$ proper conics through two given points and $q^2(q^2-1)$ proper conics through one given point. Furthermore, there are $q^5-q^2$ proper conics in total.
\end{lemma}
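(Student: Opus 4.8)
The plan is to establish first the count for four points and then to cascade downwards to three, two, one point, and finally the total, each time by a double-counting argument over incident pairs (conic, point). Throughout I read ``through three (resp.\ four) given points'' as meaning points in general position, i.e.\ a triangle (resp.\ a quadrangle with no three collinear), since a proper conic meets any line in at most two points and hence cannot contain three collinear points.

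For the four-point count I would work directly in coordinates. By Lemma \ref{quadrilateral} every quadrangle is the image under some collineation of the standard frame $\{[1,0,0]^T,[0,1,0]^T,[0,0,1]^T,[1,1,1]^T\}$, and since each $\phi_S$ carries proper conics to proper conics it suffices to count conics through this frame. Writing $F=ax^2+by^2+cz^2+dxy+exz+fyz$, vanishing at the three unit points forces $a=b=c=0$, and vanishing at $[1,1,1]^T$ forces $d+e+f=0$. Thus the conics through the quadrangle form the pencil $\{dxy+exz+fyz : d+e+f=0\}$, a line in the $PG(5,q)$ of all conics, hence exactly $q+1$ members. It remains to discard the degenerate ones: a member is degenerate iff the determinant of its matrix $(\ref{Cmatrix})$ vanishes, and this determinant is a nonzero homogeneous cubic in the pencil parameter, so there are at most three degenerate members. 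The three splittings of the four points into opposite pairs give three genuine line-pairs (for instance $z(x-y)$), all lying in the pencil and pairwise distinct because no three of the points are collinear, while no double line or single-point conic can contain four points in general position. Hence exactly three members are degenerate, leaving $q-2$ proper conics.

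From here I would run the cascade. For a triangle $T$ I count pairs $(C,P)$ with $C$ a proper conic through $T$ and $P\in C\setminus T$. Each conic through $T$ contributes exactly $q-2$ such points, namely its remaining points, all off the three sides since each side already meets $C$ in two vertices; while each of the $(q-1)^2$ points off the three sides completes $T$ to a quadrangle and so lies on $q-2$ conics through $T$. Equating the two counts gives $(q-1)^2$ conics through $T$. The same bookkeeping, now using the $q^2$ points off a line together with the value $(q-1)^2$, yields $q^2(q-1)$ conics through two points; using the $q^2+q$ remaining points and the value $q^2(q-1)$ yields $q^2(q^2-1)$ conics through one point; and a final incidence count over all $q^2+q+1$ points, each on $q^2(q^2-1)$ conics while each conic carries $q+1$ points, gives the total $(q^2+q+1)q^2(q^2-1)/(q+1)=q^5-q^2$. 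The constancy needed at each level, namely that the number of conics through a configuration depends only on the number of points, is guaranteed by Lemma \ref{quadrilateral}: transitivity on quadrangles immediately gives transitivity on triangles, on pairs and on single points, by extending any smaller configuration to a quadrangle.

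The part needing the most care, and the only genuine obstacle, is the four-point base case: one must be certain that the linear conditions cut the six-dimensional space of quadratic forms down to exactly dimension two (a pencil, not a larger net) and that the determinantal cubic has exactly three, necessarily distinct, roots. The coordinate normalization via Lemma \ref{quadrilateral} makes the dimension transparent, and exhibiting the three explicit line-pairs as members forces the cubic to have three simple roots; after that every remaining step is routine double counting. I would also note in passing the harmless restriction $q\ge 3$, automatic since $p$ is odd, which keeps the intermediate factor $q-2$ positive.
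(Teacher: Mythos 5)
Your proposal is correct, and while the four-point base case coincides with the paper's argument, the rest takes a genuinely different route. For four points both you and the paper normalize to the standard frame via Lemma \ref{quadrilateral}, reduce to the pencil $dxy+exz+fyz$ with $d+e+f=0$, and discard the degenerate members; the paper does this by setting $d=1$ and reading off the regularity condition $e\neq 0$, $e\neq -1$, while you count the $q+1$ members of the pencil and exhibit the three line-pairs as the only degenerate ones --- the same computation in slightly different clothing (your phrasing has the minor advantage of not quietly suppressing the $d=0$ member, which is degenerate anyway). From three points downward, however, the paper continues with direct coordinate normalizations and determinant conditions (explicitly for three points, with ``similarly can be proceeded'' for two points, one point, and the total), whereas you replace all of these by a cascade of incidence double counts on pairs $(C,P)$. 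Your cascade is sound: the key geometric observation that the $q-2$ residual points of a conic through a triangle all avoid the three sides (since each side already meets the conic in two vertices) is stated and correct, the point counts $(q-1)^2$, $q^2$, $q^2+q$, $q^2+q+1$ are right, and the homogeneity needed at each level does follow from transitivity on quadrangles. What your approach buys is that only one determinant ever needs to be computed and the later formulas emerge mechanically; what the paper's approach buys is that each count is verified independently rather than resting on the chain of previous ones. Both proofs are complete and correct.
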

\begin{proof} To count the number of proper conics through four given points, we use Lemma \ref{quadrilateral} and suppose, without loss of generality, that $[1,0,0]^T$, $[0,1,0]^T$, $[0,0,1]^T$ and $[1,1,1]^T$ lie on the proper conics. It follows that the matrices representing the conics are, up to scaling, of the form
$$ A= \left(\begin{array}{*3{C{3em}}} 0 & 1 & e \\ 1 & 0 & -1-e \\ e & -1-e & 0  \end{array}\right).$$
We need $A$ to be regular, which leads to the conditions $e \neq 0$ and $ e \neq -1$. This gives exactly $q-2$ choices for $e$. 
For the second statement, let $[1,0,0]^T$, $[0,1,0]^T$ and $[0,0,1]^T$ be on the proper conics. Again,
$$ A = \left(\begin{array}{*3{C{1.5em}}} 0 & 1 & e \\ 1 & 0 & f \\ e & f & 0  \end{array}\right)$$
needs to be regular, i.e.\ $e f\neq 0$, which gives $(q-1)^2$ choices. 
Similarly can be proceeded for the number of conics through two or one fixed points, as well as for the number of proper conics in total.
\end{proof} 

Now we can use Lemma \ref{CountPoints} to compute the following quantity:

\begin{definition} The number of proper conics in $PG(2,q)$ which intersect any given proper conic $\tilde{C}$ in exactly $k$ points is denoted by
$$ N_k(q) := \# \left\{ C \ |\text{ C a proper conic, } |C\cap\tilde{C}| = k \text{ for $\tilde{C}$ fixed} \right\}. $$
\end{definition}

\begin{lemma}\label{NrIntersection} In $PG(2,q)$, for $q$ odd, we have:
\begin{align*}
N_5(q) &= 1\\
N_4(q) &= \binom{q+1}{4} (q-3)\\
N_3(q) &= \binom{q+1}{3} ((q-1)^2-1)-4 N_4(q)\\
N_2(q) &= \binom{q+1}{2} (q^2(q-1)-1)-6N_4(q)-3N_3(q) \\
N_1(q) &= (q+1)(q^2(q^2-1)-1)-4N_4(q)-3N_3(q)-2N_2(q) \\
N_0(q) &= (q^5-q^2)-N_5(q)-N_4(q)-N_3(q)-N_2(q)-N_1(q)
\end{align*}
\end{lemma}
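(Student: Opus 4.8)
The plan is to prove the formula for $N_k(q)$ by a top-down argument, computing $N_5, N_4, N_3, N_2, N_1, N_0$ in that order and using inclusion-exclusion (over-counting) corrections as we descend. The starting point is the observation that $N_5(q)=1$ because a proper conic is determined by any five of its points (no three collinear), so the only proper conic meeting a fixed $\tilde{C}$ in five points is $\tilde{C}$ itself. This single fact is what makes $N_4$ clean to compute.

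For $N_4(q)$, I would choose $4$ of the $q+1$ points of $\tilde{C}$ in $\binom{q+1}{4}$ ways, and for each such quadruple invoke Lemma \ref{CountPoints}, which says there are exactly $q-2$ proper conics through those four points. Of these $q-2$ conics, one is $\tilde{C}$ itself, so $q-3$ are distinct from $\tilde{C}$. The key point I must check is that each conic counted this way meets $\tilde{C}$ in \emph{exactly} four points and is counted only once: any proper conic other than $\tilde{C}$ shares at most four points with $\tilde{C}$ (two conics sharing five points coincide, by $N_5=1$), so there is no over-counting from a single quadruple, and a conic meeting $\tilde{C}$ in exactly four points arises from exactly one choice of quadruple. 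This gives $N_4(q)=\binom{q+1}{4}(q-3)$.

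For $N_3$, $N_2$, $N_1$ the strategy is the same inclusion-exclusion bookkeeping. To compute $N_3$, I would count $\binom{q+1}{3}$ triples of points on $\tilde{C}$, multiply by the $(q-1)^2$ proper conics through three given points from Lemma \ref{CountPoints}, subtract $1$ for $\tilde{C}$ itself (giving the $(q-1)^2-1$ factor), and then correct for conics that actually meet $\tilde{C}$ in more than three points: a conic meeting $\tilde{C}$ in exactly four points gets counted once for each $3$-subset of its four common points, i.e.\ $\binom{4}{3}=4$ times, which accounts for the $-4N_4(q)$ term. The analogous accounting for $N_2$ uses $\binom{q+1}{2}$ pairs times $q^2(q-1)$ conics through two points, minus $1$ for $\tilde{C}$, with corrections $-\binom{4}{2}N_4=-6N_4$ and $-\binom{3}{2}N_3=-3N_3$; and $N_1$ uses $(q+1)$ points times $q^2(q^2-1)$ conics through one point, minus $1$, with corrections $-4N_4-3N_3-2N_2$ coming from $\binom{4}{1},\binom{3}{1},\binom{2}{1}$. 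Finally $N_0(q)$ is whatever is left of the total $q^5-q^2$ proper conics (Lemma \ref{CountPoints}) after removing all the others, which is exactly the stated formula.

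The main obstacle is justifying the inclusion-exclusion corrections rigorously rather than just asserting the binomial multiplicities. The clean way to present it is to phrase each step as counting incidences: let $M_j$ denote $\binom{q+1}{j}$ times the number of proper conics through $j$ fixed points minus $1$ (for $\tilde{C}$); then $M_j$ counts each proper conic $C\neq\tilde{C}$ with $|C\cap\tilde{C}|=k\ge j$ exactly $\binom{k}{j}$ times, so that $M_j=\sum_{k\ge j}\binom{k}{j}N_k(q)$. Since $N_k=0$ for $k\ge 6$ and $N_5=1$ contributes $\binom{5}{j}$ to $M_j$, one would in principle also need a $-\binom{5}{j}$ term; the reason it does not appear is that $\tilde{C}$ has already been subtracted off in forming $M_j$ (it is the unique conic meeting $\tilde{C}$ in five points), so the $N_5$ contribution is exactly the $-1$. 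Making this bookkeeping explicit — in particular confirming that the single subtracted $\tilde{C}$ correctly absorbs the $N_5$ term at every level $j$ — is the one delicate point, after which solving the triangular system for $N_k$ top-down yields precisely the displayed recursion.
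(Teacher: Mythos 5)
Your proposal is correct and follows essentially the same route as the paper: $N_5=1$ from the five-point determination of a conic, $N_4=\binom{q+1}{4}(q-3)$ from Lemma \ref{CountPoints} minus $\tilde{C}$ itself, and then the same $\binom{k}{j}$ over-counting corrections to descend to $N_3$, $N_2$, $N_1$, with $N_0$ by subtraction from the total. Your extra remark that the subtracted $\tilde{C}$ absorbs the would-be $N_5$ contribution at each level is a correct and slightly more careful presentation of the bookkeeping the paper leaves implicit.
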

\begin{proof}
Since any proper conic is uniquely determined by five of its points, we get $N_5(q)=1$. Let us consider the proper conic $\tilde{C}$ and fix four points on $\tilde{C}$. There are $q-3$ proper conics, which intersect $\tilde{C}$ in exactly these four points, since the total number of proper conics through four points is given by $q-2$, as seen in Lemma \ref{CountPoints}. This is true for any set of four points on $\tilde{C}$, which gives us $N_4(q)=\binom{q+1}{4} (q-3)$.

For the third statement, with the same argument as before, we get $\binom{q+1}{3}((q-1)^2-1)$ proper conics through any three points of the fixed proper conic $\tilde{C}$, not counting $\tilde{C}$ itself. But now, we also count those proper conics intersecting $\tilde{C}$ in four points, we even count these conics $\binom{4}{3}=4$ times. Hence, we have to subtract $4N_4(q)$ to get the desired result. 

Similarly for the fourth statement, recall that there are $q^2(q-1)-1$ proper conics through any two points on $\tilde{C}$, not counting $\tilde{C}$ itself. Again, we also count those proper conics intersecting $\tilde{C}$ in exactly four points and now, we even count them $\binom{4}{2}=6$ times. In addition, we count those proper conics intersecting $\tilde{C}$ in exactly three points, namely $\binom{3}{2}=3$ times. By subtracting these expressions, we obtain the claimed number. 

To obtain $N_1(q)$, a similar discussion can be made. Finally, to find the number of proper conics which are disjoint to $\tilde{C}$, we just have to subtract the number of all proper conics which intersect $\tilde{C}$ in exactly one, two, three or four points, as well as $\tilde{C}$ itself, of the number of all proper conics, which is $q^5-q^2$.
\end{proof}

\section{Partition of $PG(2,q)$}

In this section, we consider again homogeneous polynomials of degree two with coefficients $a_i$, $b_i$, $c_i$, $d_i$, $e_i$, $f_i$ in $GF(q)$, not all zero, i.e.\
\begin{equation} \label{EquE}
E_i= a_i x^2 + b_i y^2 + c_i z^2 + d_i x y+e_i x z+f_i y z
\end{equation} 
and the structure of conics $V_i:=\mathbb{V}(E_i)$ obtained by considering the pencil of such objects over $GF(q)$. For this, let us define a pencil of conics:

\begin{definition}
Let $V_i$ and $V_j$ be two conics given by the zero sets of polynomials $E_i$ and $E_j$ in $PG(2,q)$, respectively. Let $\alpha$ be any primitive element of $GF(q)$. Then we define the pencil of $V_i$ and $V_j$ as
$$ \mathcal{P}(V_i,V_j):= \left\{V_i\right\} \cup \left\{V_j\right\} \cup_{0\leq k \leq q-2} \{\mathbb{V}(E_j+\alpha^k E_i)\} $$
i.e.\ the pencil consists of the zero sets of all polynomials obtained by $GF(q)$-linear combinations of $E_i$ and $E_j$.
\end{definition}
The main goal in this section is to show that starting with two disjoint conics $V_i$ and $V_j$, their pencil leads to a partition of the plane $PG(2,q)$. Note that we do not assume that the starting conics are proper, i.e.\ $V_i$ and $V_j$ can correspond to points, lines and pairs of lines as well as to proper conics.

\begin{remark} \label{PlusVar}
As a direct consequence of the definition, we mention that if a point $P$ lies in two conics $V_i$ and $V_j$, it lies in every element of their pencil $\mathcal{P}(V_i,V_j)$. 
\end{remark}

Since a pencil of two conics is constructed by $GF(q)$-linear combinations of two equations, the following statement is immediate as well.

\begin{lemma} \label{ClosednessAddition} The pencil $\mathcal{P}(V_i,V_j)$ is independent of the representatives $V_i$ and $V_j$.
\end{lemma}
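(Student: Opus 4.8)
The plan is to recast the pencil in a symmetric, coordinate-free form and then exploit the fact that rescaling a defining polynomial leaves its zero set unchanged. First I would establish the reformulation
$$\mathcal{P}(V_i,V_j)=\left\{\mathbb{V}(\lambda E_i+\mu E_j) \;\middle|\; [\lambda:\mu]\in PG(1,q)\right\}.$$
The three pieces of the definition correspond to the three types of coefficient pairs: the case $\mu=0$ gives $\mathbb{V}(\lambda E_i)=\mathbb{V}(E_i)=V_i$, the case $\lambda=0$ gives $V_j$, and for $\lambda,\mu\neq 0$ one divides by $\mu$ to obtain $\mathbb{V}(E_j+(\lambda\mu^{-1})E_i)$; since $\alpha$ is primitive, the scalar $\lambda\mu^{-1}$ ranges exactly once over $GF(q)\setminus\{0\}=\{\alpha^k : 0\leq k\leq q-2\}$. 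Here I use repeatedly that $\mathbb{V}(cF)=\mathbb{V}(F)$ for every $c\in GF(q)\setminus\{0\}$, which is immediate from the equivalence $F(x,y,z)=0 \iff cF(x,y,z)=0$.

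Once this reformulation is in place, independence of the representatives reduces to a reparametrization argument. Any other polynomial representing the conic $V_i$ has the form $\lambda_0 E_i$ with $\lambda_0\neq 0$, and likewise $\mu_0 E_j$ represents $V_j$. Computing the pencil from these representatives produces $\{\mathbb{V}(\lambda\lambda_0 E_i+\mu\mu_0 E_j) : [\lambda:\mu]\in PG(1,q)\}$. The map $[\lambda:\mu]\mapsto[\lambda\lambda_0:\mu\mu_0]$ is a bijection of $PG(1,q)$ onto itself, so the indexing set is merely permuted and the underlying set of conics is unaffected. The same reformulation yields independence of the chosen primitive element, because $\{\alpha^k : 0\leq k\leq q-2\}=GF(q)\setminus\{0\}$ for every primitive $\alpha$, as well as symmetry in $V_i$ and $V_j$, since the projective-line description is manifestly symmetric.

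The one point requiring genuine care, and the step I expect to be the main obstacle, is verifying the first equality as an identity of \emph{sets of conics} rather than of enumerated families. Distinct coefficient pairs $[\lambda:\mu]$ may in principle define the same zero set, so one must confirm that the two descriptions have the same underlying set and not merely the same number of terms. The reparametrization bijection handles this cleanly: it matches the full index set $PG(1,q)$ of one description with that of the other \emph{before} passing to zero sets, after which equality of the resulting sets is automatic. With the reformulation secured, all three independence assertions follow without further computation.
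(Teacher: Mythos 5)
The paper offers no written proof here (the lemma is declared ``immediate'' from the fact that the pencil consists of all $GF(q)$-linear combinations of $E_i$ and $E_j$), so the question is whether your argument establishes the statement in the form the paper actually relies on. It does not. The way Lemma \ref{ClosednessAddition} is invoked later --- in the proof of Theorem \ref{DiagDisj}, where the generators $V_1,V_2$ are replaced by two \emph{proper conics belonging to the pencil}, and in Lemma \ref{NrPencils}, where $\tilde C\in\mathcal P(C,g)$ and $\tilde C\in\mathcal P(C,h)$ is claimed to force $g=h$ --- shows that ``independent of the representatives'' must mean: for any two distinct elements $W_1,W_2$ of $\mathcal P(V_i,V_j)$ one has $\mathcal P(W_1,W_2)=\mathcal P(V_i,V_j)$. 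Your argument treats only the special case $W_1=V_i$, $W_2=V_j$ with their defining polynomials rescaled by $\lambda_0$ and $\mu_0$; it says nothing about generators of the form $\mathbb V(\lambda_1E_i+\mu_1E_j)$ with both coefficients nonzero. The missing step is the actual content of the lemma: if $F_k=\lambda_kE_i+\mu_kE_j$ for $k=1,2$ with $(\lambda_1,\mu_1)$ and $(\lambda_2,\mu_2)$ linearly independent over $GF(q)$ (which holds whenever the forms are non-proportional, in particular whenever the zero sets differ), then the $2\times 2$ coefficient matrix is invertible, so $F_1$ and $F_2$ span the same two-dimensional space of quadratic forms as $E_i$ and $E_j$, and hence $\left\{\mathbb V(\lambda F_1+\mu F_2)\right\}=\left\{\mathbb V(\lambda E_i+\mu E_j)\right\}$. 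Your $PG(1,q)$ reformulation is exactly the right framework for this; you simply never perform the general change of basis, only the diagonal one.

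A secondary problem: the assertion that ``any other polynomial representing the conic $V_i$ has the form $\lambda_0E_i$'' is false when $V_i$ is a degenerate conic consisting of a single point, a case the paper explicitly admits as a generator (e.g.\ the pencil $\mathcal P(P,g)$ in the proof of Theorem \ref{DiagDisj}). Indeed $\{[1,0,0]^T\}=\mathbb V(y^2+cz^2)$ for every $c$ with $-c$ a non-square, and these forms are pairwise non-proportional. So the zero set does not determine the quadratic form up to scalar, and the lemma is only coherent if ``representatives'' is read as ``generating members of the pencil together with their defining forms inside the span of $E_i$ and $E_j$'' --- which again points back to the change-of-basis argument above rather than to rescaling.
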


The next result deals with the question, whether all points of the plane $PG(2,q)$ are in some element of $\mathcal{P}(V_i,V_j)$. 

\begin{lemma} \label{partition1} Consider the pencil $\mathcal{P}(V_i,V_j)$. Let $P$ be any point in $PG(2,q)$. If $P$ lies in some $W \in \mathcal{P}(V_i,V_j)$, then it is either only in $W$ or in every element of $\mathcal{P}(V_i,V_j)$. Moreover, every point $P$ of the plane $PG(2,q)$ is contained in at least one element of $\mathcal{P}(V_i,V_j)$.
\end{lemma}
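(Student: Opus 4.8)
The plan is to reduce the entire statement to the elementary fact that a non-identically-zero affine (degree $\le 1$) equation over a field has at most one solution. First I would observe that, although $E_i$ and $E_j$ are only given on representatives, the quantities $E_i(P)$ and $E_j(P)$ for a chosen representative $v=[x,y,z]^T$ of $P$ are genuine elements of $GF(q)$, and replacing $v$ by $\lambda v$ (for $\lambda \in GF(q)\setminus\{0\}$) multiplies both $E_i(P)$ and $E_j(P)$ by $\lambda^2\neq 0$. Hence whether $E_i(P)$ or $E_j(P)$ vanishes is independent of the representative, and ``$P$ lies in $\mathbb{V}(E)$'' is a well-posed notion. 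Fixing one representative of $P$ once and for all, I would then parametrise the pencil by a scalar $t$: the element $\mathbb{V}(E_j+tE_i)$ for $t\in GF(q)$ (with $t=0$ giving $V_j$ and $t=\alpha^k$ giving the listed conics), together with the single extra element $V_i=\mathbb{V}(E_i)$, which plays the role of the ``parameter at infinity''. Membership of $P$ is then governed by the single linear equation
\begin{equation} \label{AffineEq} E_j(P)+t\,E_i(P)=0 \end{equation}
in the unknown $t\in GF(q)$, together with the separate condition $E_i(P)=0$ for membership in $V_i$.

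For the first assertion I would analyse (\ref{AffineEq}) by cases on whether $E_i(P)$ vanishes. If $E_i(P)\neq 0$, then (\ref{AffineEq}) has the unique solution $t=-E_j(P)/E_i(P)$, and since $E_i(P)\neq 0$ the point $P$ does not lie in $V_i$; hence $P$ lies in exactly one element of $\mathcal{P}(V_i,V_j)$. If $E_i(P)=0$ and $E_j(P)\neq 0$, then (\ref{AffineEq}) has no solution, so $P$ lies in none of the conics $\mathbb{V}(E_j+tE_i)$, but $P\in V_i$ because $E_i(P)=0$; again $P$ lies in exactly one element. Finally, if $E_i(P)=E_j(P)=0$, then (\ref{AffineEq}) holds for every $t$ and also $P\in V_i$, so $P$ lies in \emph{every} element of the pencil; this is exactly the situation $P\in V_i\cap V_j$ recorded in Remark \ref{PlusVar}. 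Collecting the three cases yields the claimed dichotomy: either $P$ lies in a unique element $W$, or $P$ lies in all of them.

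The second assertion then falls out of the same case analysis, since in each of the three cases at least one element was exhibited containing $P$: when $E_i(P)\neq 0$ the scalar $t=-E_j(P)/E_i(P)$ produces one, and when $E_i(P)=0$ the point already lies in $V_i$. I do not expect a genuine obstacle here; the only points requiring care are purely bookkeeping — verifying the representative-independence of vanishing (the $\lambda^2$ rescaling), and treating the exceptional element $V_i$ on exactly the same footing as the scalar-parametrised ones, so that the ``parameter at infinity'' is not silently dropped when deciding how many elements of the pencil contain $P$.
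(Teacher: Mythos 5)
Your proposal is correct and follows essentially the same route as the paper: the key step in both is to solve the linear equation $E_j(P)+tE_i(P)=0$ for the pencil parameter, producing the element $\mathbb{V}\bigl(E_j-\tfrac{E_j(P)}{E_i(P)}E_i\bigr)$ when $E_i(P)\neq 0$. The only difference is organisational — you derive the first assertion from the same case analysis on $E_i(P)$, whereas the paper obtains it by citing Remark \ref{PlusVar} together with the representative-independence of the pencil — and your explicit check that vanishing is invariant under the $\lambda^2$ rescaling is a harmless extra precaution the paper leaves implicit.
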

\begin{proof} The first statement is immediate by Lemma \ref{PlusVar}. For the second statement, let $P$ be an arbitrary point in $PG(2,q)$ and assume that $P$ does not lie in every element of the pencil $\mathcal{P}(V_i,V_j)$. By the first statement, $P$ can lie in at most one element of $\mathcal{P}(V_i,V_j)$. Suppose $P$ neither lies in $V_i$ nor in $V_j$, so $E_i(P) \neq 0$ and $E_j(P) \neq 0$. But then, $P$ lies in the element of $\mathcal{P}(V_i,V_j)$ given by the equation $E_j - \frac{E_j(P)}{E_i(P)}E_i$.
\end{proof}

\begin{corollary} \label{TableDisjoint} Let $V_1$ and $V_2$ be disjoint conics. Then the pencil $\mathcal{P}(V_i,V_j)$ gives a partition of all points in $PG(2,q)$.
\end{corollary}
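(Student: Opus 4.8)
The plan is to combine the two parts of Lemma~\ref{partition1} directly. The corollary asserts that for disjoint conics $V_1$ and $V_2$, the pencil $\mathcal{P}(V_i,V_j)$ partitions the point set of $PG(2,q)$. A partition requires two things: that every point lies in at least one element of the pencil (\emph{covering}), and that no point lies in two distinct elements (\emph{disjointness of the blocks}). Both of these are essentially handed to us by Lemma~\ref{partition1}, so the bulk of the work is to observe that the disjointness hypothesis on $V_1$ and $V_2$ rules out the one bad case the lemma leaves open.

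First I would recall that Lemma~\ref{partition1} gives, for \emph{any} point $P$, the dichotomy that $P$ lies either in exactly one element of $\mathcal{P}(V_i,V_j)$ or in every element of it; and its second statement guarantees covering. So the covering half of the partition property is immediate. For the disjointness half, the only obstruction to the blocks being pairwise disjoint is the existence of a point lying in \emph{every} element of the pencil. By Remark~\ref{PlusVar}, a point lies in every element of $\mathcal{P}(V_i,V_j)$ precisely when it lies in both $V_i$ and $V_j$. Here I would use Lemma~\ref{ClosednessAddition}: since the pencil does not depend on the chosen representatives, I may take the representatives to be the given disjoint conics $V_1$ and $V_2$, whence $V_1 \cap V_2 = \emptyset$ means there is no point common to two fixed generators, and hence no point lying in all elements of the pencil.

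Putting these together: every point lies in at least one element, and because the ``lies in every element'' case cannot occur, Lemma~\ref{partition1} forces every point to lie in \emph{exactly} one element. Thus the elements of $\mathcal{P}(V_i,V_j)$ are pairwise disjoint and cover the plane, which is exactly the definition of a partition.

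The step I expect to require the most care is the bookkeeping between the indices: the corollary is stated for $V_1$ and $V_2$ but refers to $\mathcal{P}(V_i,V_j)$, so I would first note (via Lemma~\ref{ClosednessAddition}) that it suffices to work with the specific disjoint representatives and that the pencil is well defined independently of this choice. Beyond that the argument is a short synthesis of results already established, with no genuine computation; the only genuine content is the identification, through Remark~\ref{PlusVar}, of the ``point in every block'' case with the common-intersection case that disjointness excludes.
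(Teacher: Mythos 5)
Your argument is correct and follows essentially the same route as the paper: covering comes from the second statement of Lemma~\ref{partition1}, and the dichotomy in its first statement reduces the disjointness of the blocks to the observation that a point in every element of the pencil would be a common point of $V_1$ and $V_2$, which the hypothesis excludes. The extra remarks about representatives via Lemma~\ref{ClosednessAddition} are harmless but not needed beyond what the paper already does implicitly.
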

\begin{proof} Since $V_1$ and $V_2$ are disjoint, no point of $PG(2,q)$ can be in more than one element in $\mathcal{P}(V_i,V_j)$ since otherwise, by Lemma  \ref{partition1}, such a point would lie in every conic of $\mathcal{P}(V_i,V_j)$ and hence would be a common point of $V_1$ and $V_2$ as well. Moreover, every point of $PG(2,q)$ is contained in at least one element of $\mathcal{P}(V_i,V_j)$ by Lemma \ref{partition1} again.
\end{proof}

\section{Simultaneous diagonalization}

Recall that we are interested in conics given by the zero set of polynomials $E$ of degree two,
\begin{equation} \label{EquE2}
E =  a x^2 + b y^2 + c z^2 + d x y+e x z+f y z
\end{equation} 
where $a,b,c,d_,e,f \in GF(q)$.

For these polynomials, let us first clarify what we mean by diagonal:

\begin{definition} We call a polynomial (\ref{EquE2}) \emph{diagonal}, if the coefficients of the mixed terms $xy$, $xz$ and $yz$ are zero. We call the polynomial \emph{diagonalizable} if there is a collineation which maps the zeros of (\ref{EquE2}) to the zeros of a diagonal polynomial.
\end{definition}

Let $C$ be a proper conic with matrix representation $A$ and $S$ any 
regular matrix. For any point $P$ on $C$, $SP$ lies on $\tilde{C}$, which is given by the matrix representation ${(S^{-1})}^{T}A S^{-1} $, since
$$ P^T A P = 0 \Leftrightarrow (SP)^T {(S^{-1})}^{T}A S^{-1} SP = 0.$$
Therefore, a symmetric matrix $A$ is called diagonalizable, if there exists a regular matrix $S$, such that $S^T A S$ has diagonal form, which means that all entries off the diagonal are zero. In this chapter we are interested in the existence of a regular matrix $S$ such that for two given matrices $A$ and $B$, the matrices $S^T A S$ and $S^T B S$ are both in diagonal form.

\begin{remark} Note that this definition of diagonalizable differs from the definition one might expect from linear algebra, since we are interested in finding a non-singular matrix $S$ with $S^TAS$ in diagonal form, but $S$ is not necessarily orthogonal, i.e.\ in general, we have $S^T \neq S^{-1}$. What we are doing is to think about the existence of a collineation which takes two conics simultaneously into standard form.
\end{remark}

\subsection{The disjoint case}

We have seen in Corollary \ref{TableDisjoint} that a pencil given by two disjoint conics leads to a partition of the plane. We are now interested in the possible shapes of these partitions. We know that any conic given by the zero set of a polynomial (\ref{EquE2}) corresponds either to a proper conic, a point, a line or a pair of lines. Recall that any two distinct lines always intersect in a unique point, hence when starting with two disjoint conics, at most one line or one pair of lines can occur in the whole partition. Since every point has to be contained in exactly one of the $q+1$ conics in the pencil, this observation leads us to exactly three possible shapes for the pencil. In particular, we end up with either $q$, $q-1$ or $q-2$ proper conics. According to this, we define:

\begin{definition} The three possible shapes of a partition of the plane $PG(2,q)$ given by a pencil of disjoint conics are denoted by \emph{$q$-form} for the partition $\left\{P,C_1,...,C_q\right\}$, by \emph{$(q-1)$-form} for the partition $\left\{P,g,C_1,...,C_{q-1}\right\}$ and by \emph{$(q-2)$-form} for the partition $\left\{P,Q,g\tilde{g},C_1,...,C_{q-2}\right\}$, where $P$, $Q$ are points, $g$ is a line, $g\tilde{g}$ a pair of lines and $C_i$ proper conics, for $1 \leq i \leq q$.
\end{definition}

By analyzing these three shapes, we find the following condition for simultaneous diagonalization:
\begin{theorem} \label{DiagDisj} Two disjoint conics $V_1$ and $V_2$ in $PG(2,q)$ are simultaneously diagonalizable if and only if their pencil $\mathcal{P}(V_1,V_2)$ leads to a partition of $(q-1)$-form or $(q-2)$-form.
\end{theorem}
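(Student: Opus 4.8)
The plan is to translate simultaneous diagonalizability into the existence of a common self-polar triangle and then to extract such a triangle from the degenerate members of the pencil. Write $A,B$ for the matrix representations of $V_1,V_2$. If $P_1,P_2,P_3$ are non-collinear with $P_i^TAP_j=P_i^TBP_j=0$ for all $i\ne j$, then the matrix $S$ whose columns are $P_1,P_2,P_3$ is regular and satisfies $(S^TAS)_{ij}=P_i^TAP_j=0$ and $(S^TBS)_{ij}=P_i^TBP_j=0$ off the diagonal, so $A$ and $B$ are simultaneously diagonalized; this $S$ is the collineation furnished by Lemma \ref{quadrilateral} taking the triangle to the coordinate triangle. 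Conversely, if $S^TAS$ and $S^TBS$ are diagonal, then the columns of $S$ are pairwise conjugate for both forms and non-collinear since $S$ is regular. Thus the theorem reduces to showing that $\mathcal{P}(V_1,V_2)$ contains three non-collinear, pairwise $A$- and $B$-conjugate points exactly when it is of $(q-1)$- or $(q-2)$-form. Since diagonalizability and the partition type are collineation invariants (for the latter by Lemma \ref{ClosednessAddition}), I am free to change coordinates at will.

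The key linear-algebra input is a conjugacy lemma for singular pencil members. If $\lambda_1A+\mu_1B$ and $\lambda_2A+\mu_2B$ are degenerate members at distinct ratios with kernel vectors (singular points) $P_1,P_2$, then multiplying $(\lambda_iA+\mu_iB)P_i=0$ on the left by $P_j^{T}$ and using symmetry yields the homogeneous system $\lambda_1\alpha+\mu_1\beta=0$, $\lambda_2\alpha+\mu_2\beta=0$ in $\alpha=P_1^TAP_2$ and $\beta=P_1^TBP_2$; its determinant $\lambda_1\mu_2-\lambda_2\mu_1$ is non-zero, so $\alpha=\beta=0$ and $P_1,P_2$ are conjugate for both $A$ and $B$. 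The same computation applied to a rank-$1$ member shows that $A$ and $B$ restrict to proportional binary forms on the corresponding double line, so conjugacy along that line is the same relation for both forms.

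For the forward implication I would pass to diagonal coordinates and study the cubic $\Delta(\lambda,\mu)=\det(\lambda A+\mu B)$. Diagonality of $A,B$ makes $\Delta$ a product of three linear forms, and disjointness of $V_1,V_2$ forbids any coordinate vertex from lying on both conics, so none of these factors degenerates and all three roots lie in $PG(1,q)$; thus $\Delta$ splits completely over $GF(q)$. On the partition side the degenerate members are, by definition of the three forms, a single point in $q$-form, a point together with a line in $(q-1)$-form, and two points together with a line-pair in $(q-2)$-form; the point counts summing to $q^2+q+1$ force the line to be a rank-$1$ double line ($q+1$ points), the line-pair to have rank $2$ ($2q+1$ points), and each isolated point rank $2$. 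Matching the corank of a member to the multiplicity of the corresponding root of $\Delta$ then identifies the $q$-form as precisely the case in which $\Delta$ has a single rational root and an irreducible quadratic factor. A completely split $\Delta$ excludes this case, so a simultaneously diagonalizable pencil is never of $q$-form, which is the forward direction.

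For the converse I would exhibit the triangle directly. In $(q-2)$-form the singular points of the three degenerate members are two points $P,Q$ and the vertex $O=g\cap\tilde g$ of the line-pair; by the conjugacy lemma they are pairwise conjugate for both forms, so once non-collinearity is known the reduction of the first paragraph applies. In $(q-1)$-form I would pair the isolated point $P$ with two distinct points $R_1,R_2$ of the double line $g$ conjugate with respect to some proper member $C_i$; since $g$ is disjoint from $C_i$ in the partition it is external to $C_i$, hence $C_i$ cuts a fixed-point-free involution on the $q+1$ points of $g$ and such a pair exists, and by the proportionality above $R_1,R_2$ are conjugate for both $A$ and $B$; as $P\notin g$ the triangle $\{P,R_1,R_2\}$ is non-degenerate. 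The one genuinely delicate point, and the step I expect to be the main obstacle, is the general-position statement in the $(q-2)$-case, namely that $P,Q,O$ are not collinear. The natural argument is by contradiction: being pairwise conjugate for every proper member $C_i$, three collinear such points would be pairwise swapped by the non-degenerate involution $C_i$ induces on their common line, which is impossible for three distinct points. Making this rigorous requires ruling out, via the disjointness of Corollary \ref{TableDisjoint}, that this common line is tangent to every proper conic of the pencil, and this configuration analysis is where the real work lies.
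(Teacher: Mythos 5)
Your route is genuinely different from the paper's and, once one flagged step is filled in, it works. The paper proves the converse by explicit normalization: it moves the degenerate members of a $(q-1)$- or $(q-2)$-form pencil into standard position ($P=[1,0,0]^T$, $g=\mathbb{V}(x^2)$, resp.\ $g\tilde g=\mathbb{V}(x^2-y^2)$) by a collineation and then completes the square with a concrete matrix $S$, checking that the entire pencil becomes diagonal; its forward direction is your split-determinant argument in disguise (it exhibits the degenerate members $\mathbb{V}(E_1-E_2)$, $\mathbb{V}(E_1-\tfrac{b_1}{b_2}E_2)$, $\mathbb{V}(E_1-\tfrac{c_1}{c_2}E_2)$, which are exactly the rational roots of your cubic $\Delta$). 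You instead reduce everything to a common self-polar triangle and harvest its vertices from the radicals of the degenerate members via the conjugacy lemma; this is more conceptual, explains \emph{why} two degenerate members suffice, and replaces the matrix computations by general-position bookkeeping. One caution on your forward direction: the ``corank equals multiplicity'' matching is neither needed nor easy to justify as stated; all you need is that a completely split $\Delta$ for non-proportional $A,B$ has at least two distinct roots in $PG(1,q)$, hence the pencil has at least two degenerate members, hence cannot be of $q$-form, which has exactly one.

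The step you flag as ``where the real work lies'' is in fact immediate, so there is no genuine gap. In the $(q-2)$-case, the points $P$, $Q$, $O$ are distinct because they lie in distinct members of the partition of Corollary \ref{TableDisjoint}. Suppose they were collinear on a line $\ell$, and pick any proper member $C_i$ with matrix $M_i=\lambda_iA+\mu_iB$; by your conjugacy lemma they are pairwise conjugate with respect to $M_i$. Then the polar of $P$ with respect to $C_i$ contains the two distinct points $Q$ and $O$, hence equals $\ell$; since $P\in\ell$ this gives $P^TM_iP=0$, i.e.\ $P\in C_i$, contradicting the disjointness of the members $\{P\}$ and $C_i$ of the partition. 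No analysis of tangency or of involutions on $\ell$ is required. With that observation your $(q-2)$-construction closes, and your $(q-1)$-construction is already complete since $R_1,R_2\in g$, $R_1\neq R_2$ and $P\notin g$ force non-collinearity there.
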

\begin{proof} In the first part of the proof, we will see that starting with a pencil of two diagonal polynomials yields into a $(q-1)$-form or a $(q-2)$-form. For this, let $E_1$ and $E_2$ be the polynomials defining $V_1$ and $V_2$, respectively. Consider
$$ E_1 =  x^2+b_1 y^2+c_1 z^2  \text{ and } E_2 =  x^2 + b_2 y^2 + c_2 z^2 $$
where $b_1,b_2,c_1,c_2 \neq 0$ and $b_1 \neq b_2$ or $c_1 \neq c_2$, hence we start with two distinct proper conics. Note that this is no restriction for $q \geq 5$, since in every possible form, there are at least two proper conics and by Lemma \ref{ClosednessAddition}, the pencil is independent of the choice of representatives. For $q=3$, all pencils in $1$-form can be analyzed by hand. 

Considering the pencil $\mathcal{P}(V_1,V_2)$, we see that $\mathbb{V}(E_1-E_2)$ leads to a degenerate conic. Moreover, $\mathbb{V}(E_1 - \frac{b_1}{b_2}E_2)$ and $\mathbb{V}(E_1 - \frac{c_1}{c_2}E_2)$ lead to one or two more degenerate conics, since $b_1 \neq b_2$ or $c_1 \neq c_2$. Hence, we indeed end up with a pencil in $(q-1)$-form or $(q-2)$-form.

For the other direction, we have to show that all pairs of disjoint conics with a pencil in $(q-1)$-form or $(q-2)$-form can be diagonalized simultaneously. For this, let us first look at a pencil in $(q-1)$-form. 

We start with two disjoint conics such that their pencil is of $(q-1)$-form, which means that exactly two degenerate conics occur, namely one point $P$ and one line $g$. We can now apply a collineation to the whole pencil, such that, without loss of generality, the point $P$ is given by $[1,0,0]^T$ and the line $g$ is incident with $[0,1,0]^T$ and $[0,0,1]^T$. So we can consider the pencil given by the two elements $g=\mathbb{V}(x^2)$ and $P=\mathbb{V}(y^2+cz^2+fyz)$, where $c$ and $f$ are chosen such that $y^2+cz^2+fyz$ is irreducible.

The elements of the pencil $\mathcal{P}(P,g)$ are then given by:
\begin{align*}
P &= \mathbb{V}(y^2+cz^2+fyz)\\
g &= \mathbb{V}(x^2) \\
C_0 &=\mathbb{V}(x^2+y^2+c z^2 + fyz) \\
C_1 &=\mathbb{V}(x^2+\alpha y^2+\alpha c z^2 + \alpha fyz) \\
C_2 &=\mathbb{V}(x^2+\alpha^2 y^2+\alpha^2 c z^2 + \alpha^2fyz) \\
\vdots\\
C_{q-2} &=\mathbb{V}(x^2+\alpha^{q-2} y^2+\alpha^{q-2} c z^2 + \alpha^{q-2}fyz)
\end{align*}
Now we can apply another collineation such that all these conics are in diagonal form, namely $\phi_{S^{-1}}$, for the following matrix
$$ S = \left(\begin{array}{*3{C{1.5em}}} 1 & 0 & 0 \\ 0 & 1 & -\frac{f}{2} \\ 0 & 0 & 1  \end{array}\right). $$
Indeed, we have
$$ S^{T}AS = \left(\begin{array}{*3{C{2.5em}}} 0 & 0 & 0 \\ 0 & 1 & 0 \\ 0 & 0 &  c-\frac{f^2}{4}  \end{array}\right),$$
where $A$ is the matrix representation of $P$. Note that applying $\phi_{S^{-1}}$ to $g$ gives $g$ again. Since $\phi_{S^{-1}}(E_i+E_j) = \phi_{S^{-1}}(E_i)+\phi_{S^{-1}}(E_j)$, we indeed end up with a pencil of diagonal conics, namely:
\begin{align*}
\tilde{P} &= \mathbb{V}(y^2+\tilde{c}z^2)\\
\tilde{g} &= \mathbb{V}(x^2) \\
\tilde{C}_0 &=\mathbb{V}(x^2+y^2+\tilde{c} z^2) \\
\tilde{C}_1 &=\mathbb{V}(x^2+\alpha y^2+\alpha \tilde{c} z^2) \\
\tilde{C}_2 &=\mathbb{V}(x^2+\alpha^2 y^2+\alpha^2 \tilde{c} z^2) \\
\vdots\\
\tilde{C}_{q-2} &=\mathbb{V}(x^2+\alpha^{q-2} y^2+\alpha^{q-2} \tilde{c} z^2)
\end{align*}
for $\tilde{c} = c-\frac{f^2}{4}$.

Now we look at a pencil in $(q-2)$-form. Let us start with two proper disjoint conics $C$ and $\tilde{C}$. The pencil $\mathcal{P}(C,\tilde{C})$ contains a pair of lines and two points. By applying a suitable collineation, we can assume one of the points to be $P =[1,0,0]^T$, i.e.\ $P=\mathbb{V}(y^2+cz^2+fyz)$ for $c$ and $f$ chosen such that $y^2+cz^2+fyz$ is irreducible. Moreover, by Lemma \ref{quadrilateral}, we can assume the pair of lines to be given by $g\tilde{g}$ with $g$ through $[1,1,0]^T$ and $[0,0,1]^T$ and $\tilde{g}$ through $[1,-1,0]^T$ and $[0,0,1]^T$, so we have $g\tilde{g}=\mathbb{V}(x^2-y^2)$. With the same collineations as before, we can start with diagonal conics for $P$ and $g\tilde{g}$ and obtain the following pencil:
\begin{align*}
P &=\mathbb{V}(y^2+cz^2) \\
g\tilde{g} &= \mathbb{V}(x^2-y^2)\\
C_0 &= \mathbb{V}(x^2+cz^2)\\
C_1 &= \mathbb{V}(x^2+(\alpha-1)y^2+\alpha cz^2)\\
C_2 &= \mathbb{V}(x^2+(\alpha^2-1)y^2+\alpha^2cz^2)\\
\vdots\\
C_{q-2} &= \mathbb{V}(x^2+(\alpha^{q-2}-1)y^2+\alpha^{q-2}cz^2)
\end{align*}
Note that $C_0$ corresponds to the second point in the pencil. Again, we end up with diagonal forms only.
\end{proof}

\begin{remark} It is well known that all proper conics in $PG(2,q^2)$ are projectively equivalent to $x^2+y^2+z^2=0$, see \cite{MR1612570}*{Section 5}. In particular, any conic can be diagonalized. This can also be seen as a consequence of the above result. To see this, just choose any line $g$ disjoint from $C$ and look at the pencil $\mathcal{P}(C,g)$, which is by construction of $(q-1)$-form.
\end{remark}
 
\begin{example} Let $V_1=C$ and $V_2=\tilde{C}$ be two proper disjoint conics in $PG(2,5)$ given by
$$ C = \mathbb{V}(xy+3 xz+yz) \text{ and } \tilde{C} = \mathbb{V}(x^2+y^2+z^2+xz). $$
We obtain:
\begin{align*}
V_3 &= \mathbb{V}( x^2+y^2+z^2+xy+4 xz+yz) = \left\{[1,4,1]^T \right\}\\
V_4 &= \mathbb{V}( x^2+y^2+z^2+2 xy+2 xz+2 yz ) \\
  &=\left\{[0,1,4]^T,[1,0,4]^T,[1,1,3]^T,[1,2,2]^T,[1,3,1]^T,[1,4,0]^T\right\}\\
V_5 &= \mathbb{V}( x^2+y^2+z^2+4xy+3xz+4yz) \\
V_6 &= \mathbb{V}( x^2+y^2+z^2+3xy+3yz) 
\end{align*}
Since there are four proper conics in this pencil, we can diagonalize $C$ and $\tilde{C}$ simultaneously. The first step is to perform a collineation $\phi_S$ which maps $V_3$ to $[1,0,0]^T$ and $V_4$ to the line $\mathbb{V}(x^2)$, i.e.\ we apply
$$ S^{-1} = \left(\begin{array}{*3{C{1.5em}}} 1 & 0 & 1 \\ 4 & 1 & 0 \\ 1 & 4 & 4  \end{array}\right) $$ 
to $V_3$ and $V_4$, which leads to
$$P=\mathbb{V}(y^2+3z^2+4yz)  \text{ and } g=\mathbb{V}(x^2).  $$
As the polynomial defining $P$ is not in diagonal form, we have to perform one more collineation $\phi_T$, given by the matrix
$$ T^{-1} = \left(\begin{array}{*3{C{1.5em}}} 1 & 0 & 0 \\ 0 & 1 & 1 \\ 0 & 0 & 1  \end{array}\right).$$
Combining these two steps gives the following pencil:
\begin{align*}
\tilde{P}&= \mathbb{V} (y^2+2 z^2) \\
\tilde{g}&= \mathbb{V}(x^2) \\
\tilde{C}_1 &= \mathbb{V}(x^2+y^2+2 z^2) \\
\tilde{C}_2&= \mathbb{V} (x^2+2 y^2+4 z^2) \\
\tilde{C}_3&= \mathbb{V} (x^2+4 y^2+ 3z^2) \\
\tilde{C}_4&= \mathbb{V} (x^2 + 3 y^2+ z^2)
\end{align*}
Hence, we find a collineation $\phi_M$ with
$$ M^{-1} = S^{-1} T^{-1} =\left(\begin{array}{*3{C{1.5em}}} 1 & 0 & 1 \\ 4 & 1 & 0 \\ 1 & 4 & 4  \end{array}\right)   \left(\begin{array}{*3{C{1.5em}}} 1 & 0 & 0 \\ 0 & 1 & 1 \\ 0 & 0 & 1  \end{array}\right) = \left(\begin{array}{*3{C{1.5em}}} 1 & 0 & 1 \\ 4 & 1 & 1 \\ 1 & 4 & 3  \end{array}\right)$$
such that
$$ \phi_M(C)= \left(\begin{array}{*3{C{2em}}} 1 & 0 & 0 \\ 0 & 4 & 0 \\ 0 & 0 & 3  \end{array}\right) \text{ and } \phi_M(\tilde{C})=\left(\begin{array}{*3{C{1.5em}}} 1 & 0 & 0 \\ 0 & 3 & 0 \\ 0 & 0 & 1  \end{array}\right). $$
\end{example}

Our next goal is to show that not all pairs of proper disjoint conics in $PG(2,q)$ can be diagonalized simultaneously, i.e.\ we have to show that pencils in $q$-form actually exist.

\begin{lemma} \label{NrPencils} Let $C$ be a proper conic in $PG(2,q)$. Then there are $\frac{1}{2}(2q-3q^2+q^3)$ proper conics $\tilde{C}$, such that $\mathcal{P}(C,\tilde{C})$ is in $(q-1)$-form, there are $\frac{1}{4}(-6 q + 5 q^2 + 5 q^3 - 5 q^4 + q^5)$ proper conics $\tilde{C}$ such that $\mathcal{P}(C,\tilde{C})$ is in $(q-2)$-form and there are $\frac{1}{8}( 6 q - 3 q^2 - 7 q^3 + 3 q^4 + q^5)$ proper conics $\tilde{C}$ such that $\mathcal{P}(C,\tilde{C})$ is in $q$-form.
\end{lemma}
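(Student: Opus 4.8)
The plan is to count, for a fixed proper conic $C$, the proper conics $\tilde C$ disjoint from $C$ according to the shape of the partition $\mathcal{P}(C,\tilde C)$. By Corollary \ref{TableDisjoint} every such $\tilde C$ produces a partition, and by the classification into exactly three shapes this partition is of $q$-form, $(q-1)$-form or $(q-2)$-form, whereas conics meeting $C$ produce none of these. Hence the three numbers sought sum to $N_0(q)$, which is already available from Lemma \ref{NrIntersection}; since the collineation group acts transitively on proper conics, none of the counts depends on the choice of $C$. I would then reduce each count to the number of pencils of the given shape through $C$: a $(q-1)$-form pencil contains $q-1$ proper conics, a $(q-2)$-form pencil contains $q-2$, and a $q$-form pencil contains $q$, with $C$ itself one of them in each case. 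So the desired numbers are $(q-2)$, $(q-3)$ and $(q-1)$ times the respective numbers of pencils through $C$.

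For the $(q-1)$-form I would set up a bijection with the external lines of $C$. Every $(q-1)$-form partition contains exactly one line $g$, and, being a partition, $g$ is disjoint from $C$, hence external; conversely, for any external line $g$ the pencil $\mathcal{P}(C,g)$ is a partition containing a single line, so by the classification it is of $(q-1)$-form, with $g$ as its unique line (this is the construction noted in the remark following Theorem \ref{DiagDisj}). Distinct external lines thus give distinct pencils, so the number of $(q-1)$-form pencils through $C$ equals the number of external lines. Using the standard tangent/secant counts recalled in the preliminaries (there are $q+1$ tangents and $\binom{q+1}{2}$ secants), this number is $\binom{q}{2}$, and multiplying by $q-2$ gives the first formula.

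For the $(q-2)$-form I would argue analogously, now with the unique pair of lines $g\tilde g$ of the partition. Both $g$ and $\tilde g$ are external to $C$, and conversely any two distinct external lines span a pair-of-lines conic disjoint from $C$; since $\mathcal{P}(C,g\tilde g)$ is a partition containing a pair of lines, the classification forces it to be of $(q-2)$-form, with $g\tilde g$ as its unique pair-of-lines member. I expect this to be the main obstacle: one must verify that the correspondence is an \emph{exact} bijection, and in particular that the pair $\{g,\tilde g\}$ is counted \emph{unordered}, so that the number of $(q-2)$-form pencils through $C$ is $\binom{\binom{q}{2}}{2}$ and not the ordered count. This factor of two is the only genuinely delicate point; establishing disjointness, well-definedness and injectivity of the map is otherwise routine bookkeeping. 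Multiplying by $q-3$ then yields the $(q-2)$-form count.

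Finally, I would obtain the $q$-form count by subtraction, namely $N_0(q)$ minus the two numbers already found, because the unique point member of a $q$-form partition does not determine the pencil (points occur in all three shapes) and so does not lend itself to a direct bijection. As a built-in safeguard I would check that the resulting number is a nonnegative multiple of $q-1$, i.e.\ that it equals $q-1$ times an integer number of $q$-form pencils through $C$; this integrality requirement pins down the normalisation of the $(q-2)$-count and guards against precisely the ordered/unordered slip flagged above. Substituting the value of $N_0(q)$ from Lemma \ref{NrIntersection} and simplifying then gives the closed forms.
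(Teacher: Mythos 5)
Your plan is structurally identical to the paper's proof: a bijection between $(q-1)$-form pencils through $C$ and external lines of $C$, a bijection between $(q-2)$-form pencils through $C$ and pairs of external lines, and subtraction from $N_0(q)$ for the $q$-form. The first count agrees with the statement, since $\binom{q}{2}(q-2)=\tfrac{1}{2}(q^3-3q^2+2q)$. The divergence occurs exactly at the point you flag as delicate: the paper takes $\tfrac{q(q-1)}{2}\bigl(\tfrac{q(q-1)}{2}-1\bigr)$ ``choices for $g\tilde g$'', i.e.\ the \emph{ordered} count, and that ordered count is what produces the printed coefficient $\tfrac14$ in the $(q-2)$-form formula. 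Your unordered count $\binom{q(q-1)/2}{2}$ is half of that, so carrying out your plan yields $\tfrac{1}{8}(q^5-5q^4+5q^3+5q^2-6q)$ conics $\tilde C$ in the $(q-2)$-form case and, after subtracting from $N_0(q)$, $\tfrac{1}{4}(q^5-q^4-q^3+q^2)$ in the $q$-form case --- not the formulas in the statement.

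Your reasoning for the unordered count is the correct one: a pair-of-lines conic is the zero set of $\ell_g\ell_{\tilde g}$ and is determined by the unordered pair $\{g,\tilde g\}$, a $(q-2)$-form partition contains exactly one such member, and two distinct pencils through $C$ share only $C$, so the correspondence really is a bijection onto unordered pairs of distinct external lines. Your integrality safeguard then settles which normalisation is right: writing $N_0(q)=a(q-1)+b(q-2)+c(q-3)$ with $b=\tfrac{q(q-1)}{2}$, the printed formulas force $c=\tfrac{q(q-1)}{2}\bigl(\tfrac{q(q-1)}{2}-1\bigr)$ and hence, for $q=5$, $a=\tfrac{510}{4}$, which is not an integer, whereas the unordered count gives $c=\binom{10}{2}=45$ and $a=150=\tfrac{q^2(q^2-1)}{4}$. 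So your argument is sound, but it establishes a corrected version of the lemma (the $(q-2)$-form count halved and the $q$-form count adjusted accordingly) rather than the statement as printed; you should write out the bijection for the $(q-2)$-form explicitly instead of calling it routine bookkeeping, since it is precisely the step on which the printed formula goes astray, and you should state clearly that your final closed forms differ from the ones you were asked to prove.
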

\begin{proof} Let $C$ be any fixed proper conic. To find the number of proper conics $\tilde{C}$ such that $\mathcal{P}(C,\tilde{C})$ is in $(q-1)$-form, we can also look at the number of pencils $\mathcal{P}(C,g)$, for $g$ an external line of $C$. There are exactly $\frac{q(q-1)}{2}$ external lines of $C$. Every such pencil $\mathcal{P}(C,g)$ leads to $(q-2)$ proper conics $\tilde{C}$. Note that by Lemma \ref{ClosednessAddition}, $\tilde{C} \in \mathcal{P}(C,g)$ and $\tilde{C} \in \mathcal{P}(C,h)$ for two lines $g$ and $h$ implies $g=h$. Hence, there are $\frac{q(q-1)(q-2)}{2} = \frac{1}{2}(2q-3q^2+q^3)$ proper conics $\tilde{C}$, such that $\mathcal{P}(C,\tilde{C})$ is in $(q-1)$-form.

Similarly, to find the number of proper conics $\tilde{C}$ such that $\mathcal{P}(C,\tilde{C})$ is in $(q-2)$-form, we can also look at the number of pencils $\mathcal{P}(C,g\tilde{g})$, for $g\tilde{g}$ a pair of external lines of $C$. There are $\frac{q(q-1)}{2} (\frac{q(q-1)}{2}-1)$ choices for $g\tilde{g}$ and in every such pencil there are $q-3$ proper conics $\tilde{C}$. Hence, there are $\frac{q(q-1)}{2} (\frac{q(q-1)}{2}-1) (q-3) = \frac{1}{4}(-6 q + 5 q^2 + 5 q^3 - 5 q^4 + q^5)$ proper conics $\tilde{C}$, such that $\mathcal{P}(C,\tilde{C})$ is in $(q-2)$-form.

By Lemma \ref{NrIntersection}, there are $\frac{1}{8}(2 q - 5 q^2 + 7 q^3 - 7 q^4 + 3 q^5)$ proper conics $\tilde{C}$, which are disjoint from a given conic $C$. Subtracting the number of pencils $\mathcal{P}(C,\tilde{C})$ in $(q-1)$-form and in $(q-2)$-form, we obtain exactly $\frac{1}{8}( 6 q - 3 q^2 - 7 q^3 + 3 q^4 + q^5)$ proper conics $\tilde{C}$ such that $\mathcal{P}(C,\tilde{C})$ is in $q$-form.
\end{proof}

\begin{corollary} For every conic $C$, there exists a conic $\tilde{C}$, such that $C$ and $\tilde{C}$ cannot be diagonalized simultaneously in $PG(2,q)$.
\end{corollary}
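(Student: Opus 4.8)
The plan is to read off the corollary as an immediate consequence of Theorem \ref{DiagDisj} together with the enumeration in Lemma \ref{NrPencils}. By Theorem \ref{DiagDisj}, a pair of disjoint conics fails to be simultaneously diagonalizable exactly when their pencil is in $q$-form. Hence, to produce for a fixed proper conic $C$ a companion $\tilde{C}$ that cannot be simultaneously diagonalized with it, it suffices to exhibit at least one proper conic $\tilde{C}$ for which $\mathcal{P}(C,\tilde{C})$ is in $q$-form. This reduces the entire statement to checking that the count of such conics supplied by Lemma \ref{NrPencils}, namely
$$ \tfrac{1}{8}\left(6q - 3q^2 - 7q^3 + 3q^4 + q^5\right), $$
is strictly positive for every admissible order $q$.

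The remaining task is purely to verify this positivity for all $q = p^n$ with $p$ an odd prime, so $q \geq 3$. First I would factor the quintic inside the parentheses. Writing $f(q) = q^5 + 3q^4 - 7q^3 - 3q^2 + 6q$, one checks that $q=0$, $q=1$ and $q=-1$ are roots; dividing out the corresponding linear factors yields
$$ f(q) = q(q-1)(q+1)(q^2 + 3q - 6). $$
For $q \geq 3$ each of the factors $q$, $q-1$, $q+1$ is positive, and the quadratic $q^2 + 3q - 6$ has positive root $\tfrac{1}{2}(-3 + \sqrt{33}) < 2$, so it too is positive whenever $q \geq 2$. Therefore $f(q) > 0$, and the $q$-form count is strictly positive, for every odd prime power $q$. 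Combining this with the reduction above, for any fixed proper conic $C$ there is at least one proper conic $\tilde{C}$ with $\mathcal{P}(C,\tilde{C})$ in $q$-form, and by Theorem \ref{DiagDisj} such a pair cannot be diagonalized simultaneously.

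I do not expect any genuine obstacle here: the corollary is essentially a sign check on an explicit polynomial, once the $q$-form enumeration of Lemma \ref{NrPencils} is in hand. The one point deserving care is bookkeeping the admissible range of $q$. Since the claim is asserted for all odd prime powers and not merely for large $q$, I would confirm the smallest case $q=3$ by direct substitution (where the count equals $\tfrac{1}{8}\cdot 288 = 36$) rather than appealing to asymptotic behaviour of $f$, thereby covering every case uniformly through the factorization.
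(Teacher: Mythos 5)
Your proposal is correct and follows exactly the paper's own route: cite Lemma \ref{NrPencils} for the count of conics $\tilde{C}$ with $\mathcal{P}(C,\tilde{C})$ in $q$-form, observe that this count is positive for all $q\geq 3$, and conclude via Theorem \ref{DiagDisj}. The only difference is that you actually justify the positivity through the (correct) factorization $q(q-1)(q+1)(q^2+3q-6)$, whereas the paper merely asserts it.
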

\begin{proof} By Lemma \ref{NrPencils}, for every fixed proper conic $C$, there are $\frac{1}{8}( 6 q - 3 q^2 - 7 q^3 + 3 q^4 + q^5)$ proper conics $\tilde{C}$ such that $\mathcal{P}(C,\tilde{C})$ is in $q$-form, a number which is strictly greater than zero, for all $q \geq 3$. By Theorem \ref{DiagDisj}, these pairs of conics $C$ and $\tilde{C}$ cannot be diagonalized simultaneously.
\end{proof}

\begin{remark} In the real projective plane, all pairs of disjoint conics can be diagonalized simultaneously (see \cite{MR0094690}). Note that in this case, two disjoint conics $C$ and $\tilde{C}$ have the property that either all points of $C$ are external points of $\tilde{C}$ or all points of $C$ are internal points of $\tilde{C}$, as they do not intersect. In $PG(2,q)$ however, the two conics can be disjoint without having this property, i.e.\ $C$ can contain external points of $\tilde{C}$ as well as internal points. We will show in the remaining part of this subsection, that if all conics in a pencil are disjoint and for each pair of proper conics in this pencil, one conic consists of only external points or only internal points of the other one, simultaneous diagonalization is still possible. 
\end{remark}

We are therefore interested in the following property of conic pairs:

\begin{definition} Let $C$ and $\tilde{C}$ be two proper conics in $PG(2,q)$. We say that $C$ and $\tilde{C}$ are in {\em nested position} if every point of $C$ is an external point of $\tilde{C}$ or every point of $C$ is internal point of $\tilde{C}$ and vice versa, i.e.\ every point of $\tilde{C}$ is an external point of $C$ or every point of $\tilde{C}$ is internal point of $C$.
\end{definition}

\begin{remark} Note that if $C$ consists of internal points of $\tilde{C}$, that does not imply that $\tilde{C}$ consists of external points of $C$. In particular, in the finite projective plane $PG(2,q)$ it is possible that two conics $C$ and $\tilde{C}$ are such that $C$ consists of internal points of $\tilde{C}$ and $\tilde{C}$ consists of internal points of $C$ as well. Because of that, we need to write down both conditions in the definition above.
\end{remark}

The interested reader can refer to \cite{MR2852921} for information 
about the maximal number of external points on $C$ with respect to another given conic $\tilde{C}$ as well as the number of conics $C$ and $\tilde{C}$ which lie in nested position. Moreover, about the construction of conics consisting only of external points of a given conic, one can have a look at \cite{MR2854629}.

We will use the following well-known result about nested conics:

\begin{lemma} Let $C$ and $\tilde{C}$ be in nested position. Then $C$ and $\tilde{C}$ are disjoint and have no tangents in common.
\end{lemma}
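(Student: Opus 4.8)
The plan is to prove the two assertions separately, treating disjointness as a quick consequence of the definition and spending the real effort on ruling out common tangents. For disjointness, I would argue directly: by the definition of nested position, either every point of $C$ is an external point of $\tilde C$, or every point of $C$ is an internal point of $\tilde C$. In both cases every point of $C$ lies off $\tilde C$, since external and internal points are by definition points not on the conic. Hence $C\cap\tilde C=\emptyset$, and disjointness is immediate.

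For the common tangents, I would argue by contradiction, assuming that $\ell$ is a common tangent, touching $C$ at a point $P$ and $\tilde C$ at a point $Q$. First I would note $P\neq Q$, since $P\in C$ lies off $\tilde C$ by the disjointness just established. The elementary fact I would record next is that every point of a tangent line to $\tilde C$, other than its point of contact, is an \emph{external} point of $\tilde C$: such a point lies off $\tilde C$ (the tangent meets $\tilde C$ only at $Q$), yet a tangent of $\tilde C$ passes through it, namely $\ell$ itself, so it cannot be internal. Applying this to $P$, which lies on $\ell$ and differs from $Q$, shows $P$ is an external point of $\tilde C$; by the definition of nested position this forces \emph{every} point of $C$ to be an external point of $\tilde C$, excluding the internal alternative.

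The contradiction then comes from a double count of the incidences $I=\{(R,t):R\in C,\ t \text{ a tangent of } \tilde C,\ R\in t\}$. Counting by $R$: each of the $q+1$ points of $C$ is external to $\tilde C$ and hence lies on exactly two tangents of $\tilde C$, so $|I|=2(q+1)$. Counting by $t$: there are exactly $q+1$ tangents of $\tilde C$, and each meets the conic $C$ in at most two points, giving $|I|\le 2(q+1)$ with equality only if every tangent of $\tilde C$ is a secant of $C$. Equality must therefore hold; but the common tangent $\ell$ is itself a tangent of $\tilde C$ meeting $C$ only at $P$, so $|\ell\cap C|=1$, contradicting the requirement that every tangent of $\tilde C$ meet $C$ in two points.

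I expect the main subtlety to be the bookkeeping rather than any deep idea: the only nontrivial ingredient is the observation that the non-contact points of a tangent line are external, and the point worth double-checking is that the two alternatives in the definition of nested position are correctly resolved, with the internal alternative ruled out the instant a single external point of $\tilde C$ is exhibited on $C$. Once all of $C$ is known to be external, the incidence count becomes completely rigid, and it is precisely this rigidity that the self-intersection behaviour of the common tangent $\ell$ violates.
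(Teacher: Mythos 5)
Your proof is correct and rests on the same key idea as the paper's: the double count of incidences between the $q+1$ points of $C$ and the $q+1$ tangents of $\tilde C$, forcing every tangent of $\tilde C$ to be a secant of $C$. The only cosmetic difference is that you run the argument by contradiction and use the observation that non-contact points of a tangent are external to dispose of the internal alternative, whereas the paper treats the internal case directly (no point of $C$ lies on any tangent of $\tilde C$) and the external case by the same count.
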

\begin{proof} The points of $C$ are either all external or all internal to $\bar{C}$. Thus, $C$ does not contain a point of $\tilde{C}$ and hence, they are disjoint. For the second property, we have to distinguish two cases. We start with $C$ consisting of internal points of $\tilde{C}$ only. In this case, no point of $C$ is incident with a tangent of $\tilde{C}$, i.e.\ the two conics have no tangents in common. 
For $C$ consisting of external points of $\tilde{C}$ only, we know that through every point of $C$ there are exactly two tangents of $\tilde{C}$ . There are $q+1$ points on $C$ and there are $q+1$ tangents of $\tilde{C}$. As every point of $C$ is incident with two tangents of $\tilde{C}$, also every tangent of $\tilde{C}$ need to be incident with two points of $C$, hence every tangent of $\tilde{C}$ is a secant of $C$.
\end{proof}

\begin{remark} \label{NestedInvariant} As a direct consequence of the definitions, we know that the property of two proper conics lying in nested position is invariant under collineations.
\end{remark}

\begin{theorem} The only pencil of two proper disjoint conics $C$ and $\tilde{C}$ in $PG(2,q)$ for $q$ odd, where all pairs of proper conics lie in nested position, is a pencil $\mathcal{P}(C,\tilde{C})$ in $(q-1)$-form. Moreover, a pencil $\mathcal{P}(C,\tilde{C})$ in $(q-1)$-form has the property that all pairs of proper conics lie in nested position.
\end{theorem}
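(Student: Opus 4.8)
The plan is to translate the geometric notion of nested position into an arithmetic statement about square classes, and then read off the answer from the normal forms of the three partition types. The tool I would first record is the standard criterion that, for a proper conic $C$ with symmetric nonsingular matrix $A$ and $Q(R)=R^TAR$, a point $R\notin C$ is \emph{external} if and only if $-\det(A)\,Q(R)$ is a nonzero square in $GF(q)$, and \emph{internal} otherwise; this is well defined, since rescaling $A$ by $\mu$ and $R$ by $\nu$ multiplies $-\det(A)Q(R)$ by the square $\mu^4\nu^2$. Writing a disjoint pencil via generators $A_1,A_2$ with a typical member $C_\lambda=\mathbb{V}(R^T(A_1+\lambda A_2)R)$, I would note that for $R\in C_\lambda$ one has $Q_\mu(R)=(\mu-\lambda)\,Q_2(R)$, where $Q_i(R)=R^TA_iR$, so $R$ is external to $C_\mu$ exactly when $-\det(A_\mu)(\mu-\lambda)Q_2(R)$ is a square. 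Because the pencil is a partition (Corollary \ref{TableDisjoint}), $Q_2$ never vanishes on $C_\lambda$, and the entire $\mu$-dependence sits in a constant factor. This yields the key reformulation: the pair $(C_\lambda,C_\mu)$ lies in nested position if and only if $Q_2$ (equivalently the generator $E_2$) takes a single square class on $C_\lambda$ and a single square class on $C_\mu$; hence all pairs of proper members are in nested position if and only if the fixed form $E_2$ is of constant square class along every proper member of the pencil.

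With this in hand the \emph{moreover} direction is immediate. By Lemma \ref{ClosednessAddition} and Remark \ref{NestedInvariant} I may use the normal form of a $(q-1)$-pencil from the proof of Theorem \ref{DiagDisj}, generated by the double line $g=\mathbb{V}(x^2)$ and the point $P=\mathbb{V}(y^2+\tilde c z^2)$. On a proper member $C_\lambda=\mathbb{V}(x^2+\lambda(y^2+\tilde c z^2))$ the defining equation gives $y^2+\tilde c z^2=-\lambda^{-1}x^2$, which is $-\lambda^{-1}$ times a nonzero square; hence $E_2=y^2+\tilde c z^2$ is of constant square class on $C_\lambda$, and by the reformulation every pair of proper members is in nested position. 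The structural reason this works is precisely the presence of the rank-one (double-line) member, which forces $E_2$ to be proportional to a perfect square on each conic.

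For the forward direction I would show that only the $(q-1)$-form can be nested by ruling out the other two shapes. The clean organizing observation is that among the three partition types the $(q-1)$-form is exactly the one possessing a rank-one degenerate member (the double line $\mathbb{V}(x^2)$), whereas the point $\mathbb{V}(y^2+\tilde c z^2)$ of a $q$-form and the two points together with the line pair $\mathbb{V}(x^2-y^2)$ of a $(q-2)$-form all have rank two. So it suffices to exhibit, in each of the $q$-form and the $(q-2)$-form, a single proper member on which $E_2$ assumes \emph{both} square classes, contradicting the reformulation and thereby producing a pair of proper conics not in nested position. For the $(q-2)$-form I would use the diagonal normal form from the proof of Theorem \ref{DiagDisj}, with $C_\lambda=\mathbb{V}(x^2+(\lambda-1)y^2+\lambda c z^2)$ and point form $E_2=y^2+cz^2$, and locate points of a suitable $C_\lambda$ on the coordinate lines $z=0$ and $y=0$: these carry the values $y^2$ and $cz^2$ of $E_2$, lying in distinct square classes as soon as $c$ is a non-square, while the existence of such points is governed by whether $1-\lambda$ and $-\lambda c$ are squares. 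One then checks that a $\lambda$ realizing the two values in different classes always exists; alternatively, the preceding lemma on nested conics permits me instead to exhibit a common tangent of two members.

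The main obstacle is the $q$-form, which lacks the transparent diagonal normal form of the other two shapes. Here I would first have to manufacture a workable representative: taking $P=\mathbb{V}(y^2+\tilde c z^2)$ and a second generator that couples the $y,z$-block through an off-diagonal entry, one arranges that the characteristic polynomial $\det(\lambda A_P+A_2)$ (of degree two rather than three, since $\det A_P=0$) is irreducible over $GF(q)$, so that $P$ is the unique degenerate member and the pencil is genuinely of $q$-form. The same square/non-square bookkeeping for $E_2$ along a proper member then applies, but now it must be carried out without diagonal conics and split into cases according to $q\bmod 4$ and the square classes of the constants $\tilde c$, $\lambda$ and $1-\lambda$. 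Verifying in this generality that a proper member always carries both square classes of $E_2$, together with the handful of small-$q$ cases settled by hand, is the genuinely technical part; the reformulation of the first paragraph is what keeps the analysis under control.
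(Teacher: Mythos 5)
Your reduction of nestedness to square classes is sound, and it gives a genuinely different and cleaner proof of the \emph{second} statement than the paper's. The paper verifies the ``moreover'' direction by an explicit computation: for a point of $C_i$ it forms the polar line with respect to $C_j$, intersects it with $C_j$, and checks that the discriminant of the resulting quadratic in $z$ is a square independently of the chosen point. Your identity $Q_\mu(R)=(\mu-\lambda)Q_2(R)$ for $R\in C_\lambda$, combined with the external-point criterion (whose well-definedness you correctly justify, and which is robust here since only uniformity matters), packages that entire computation into one line; the observation that $E_2=-\lambda^{-1}x^2$ on $C_\lambda$, with $x\neq 0$ there by Corollary \ref{TableDisjoint}, then finishes the $(q-1)$-form case immediately. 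That half of your argument is complete and correct.

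The forward direction, however, has a genuine gap, and it is the harder half of the theorem. For the $(q-2)$-form you reduce to finding $\lambda\notin\{0,1\}$ with $1-\lambda$ and $-\lambda c$ both nonzero squares, but you do not prove that such a $\lambda$ exists; it does (a quadratic character sum gives roughly $q/4$ choices, with small $q$ checked by hand, and for $q=3$ the claim is vacuous since that pencil has a single proper member), but this must be written out. More seriously, for the $q$-form you have no normal form to compute with --- none can be diagonal, by Theorem \ref{DiagDisj} --- and you explicitly defer the entire case as ``the genuinely technical part.'' As it stands, nothing in your argument rules out a $q$-form pencil all of whose proper members are pairwise nested. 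The paper disposes of both excluded shapes at once with a short parity count that your setup does not replace: a fixed proper conic $C$ has exactly $q(q+1)/2$ external points; if all pairs were nested, every other proper member of the pencil would contribute $0$ or $q+1$ of them, and the degenerate members contribute an explicitly bounded number $i$, forcing $k(q+1)+i=q(q+1)/2$. Since $q(q+1)/2\equiv (q+1)/2\pmod{q+1}$, this fails for the values of $i$ arising in the $q$-form and (for $q\geq 5$) the $(q-2)$-form. If you wish to stay within your framework, the analogous global step would be to count the points of $PG(2,q)$ on which $E_2$ lies in each square class (taking $E_2$ to be the point member, a rank-two form, this count is again $q(q+1)/2$) and compare it with what constancy on each proper member would force; some such counting argument must be supplied before the first statement is proved.
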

\begin{proof} The first statement can be proven by combinatorial considerations only, namely by excluding that such a pencil can be in $q$-form or $(q-2)$-form. There are $\frac{q(q+1)}{2}$ external points for any fixed proper conic $C$. Assume that all pairs of proper conics in a pencil in $q$-form are in nested position. For this, we have to consider two cases, namely the unique point $P$ in the pencil being an external point or an internal point of $C$. Let $P$ be an external point of $C$. By our assumption, this gives $k(q+1)+1$ external points of $C$, for an integer $k$, depending on the number of proper conics in the pencil consisting of external points of $C$ only. This means
$$ k(q+1)+1 = \frac{q(q+1)}{2} $$
and therefore $k = \frac{q}{2} - \frac{1}{q+1}$, which is not an integer, because $q$ is odd. Hence, this case is not possible. 

Now, let $P$ be an internal point of $C$. Again, by assumption, this would imply $k(q+1) = \frac{q(q+1)}{2}$ for an integer $k$, which is not possible for $q$ odd. Because of this, not all pairs of proper conics in a pencil of $q$-form can be in nested position.

To exclude that such a pencil can be in $(q-2)$-form, assume again that all pairs of conics lie in nested position. Here, we have to consider different cases of how many points of $g\tilde{g}, P,\tilde{P}$ are external points of $C$. The lines $g$ and $\tilde{g}$ intersect in exactly one point, say $R$. If $R$ is an external point of $C$, there are two tangents of $C$ through $R$. Since every two lines intersect, every other of the remaining $q-1$ tangents of $C$ must intersect both $g$ and $\tilde{g}$ in different points, which gives $1+(q-1)=q$ external points on $g\tilde{g}$. For the case that $R$ is an internal point of $C$, by the same argument, we obtain $q+1$ external points on $g\tilde{g}$. Moreover, the two points $P$ and $\tilde{P}$ can be either external points or internal points of $C$, which gives us in total $q,q+1,q+2$ or $q+3$ external points not on any proper conic of the pencil. Hence, we obtain the condition
$$k(q+1)+i=\frac{q(q+1)}{2}$$
for some integer $k$ and $i \in \left\{-1,0,1,2\right\}$, which is not possible for $q$ odd and $q \geq 5$. Hence, not all pairs of proper conics in a pencil of $(q-2)$-form can be in nested position. Note that the statement is trivially true for $q=3$ as well, since a pencil in $q$-form for $q=3$ consists of only four elements, which are two points, one pair of lines and only one proper conic. Hence, we do not even have two proper conics in this case.

To show the second statement, remember that by Lemma \ref{NestedInvariant}, the property of lying in nested position is invariant under collineations. By assumption, the pencil is in $(q-1)$-form and hence can be transformed into a pencil of diagonal conics, as shown in Theorem \ref{DiagDisj}. Therefore, it is enough to show that in the following pencil, where $c$ is chosen to be a non-square, all pairs of proper conics lie in nested position:
\begin{align*}
P &= \mathbb{V}(y^2+c z^2)\\
g &= \mathbb{V}(x^2) \\
C_0 &=\mathbb{V}(x^2+y^2+c z^2) \\
C_1 &=\mathbb{V}(x^2+\alpha y^2+\alpha c z^2) \\
C_2 &=\mathbb{V}(x^2+\alpha^2 y^2+\alpha^2 c z^2) \\
\vdots\\
C_{q-2} &=\mathbb{V}(x^2+\alpha^{q-2} y^2+\alpha^{q-2} c z^2)
\end{align*}

So, let $C_i$ and $C_j$ be given by
$$ C_i=\mathbb{V}(x^2+\alpha^i y^2+\alpha^i c z^2) \text{ and } C_j=\mathbb{V}(x^2+\alpha^j y^2+\alpha^j c z^2).$$
Since all points of $PG(2,q)$ with a zero x-coordinate lie on the line $g= \mathbb{V}(x^2)$, every point $P$ of $C_i$ can be written in the form $P=[1,p_2,p_3]^T$ and in particular, we have $p_2^2 =
-\alpha^{-i}-c p_3^2$.  The  conics $C_i$ and $C_j$  lie in nested position
if either for all such points $P$ of $C_i$, $(A_j P)^T$
is a secant of $C_j$ or this happens for no such point $P$, where $A_j$ is the matrix representation of $C_j$, i.e.\
$$ A_j = \left(\begin{array}{*3{C{1.5em}}} 1 & 0 & 0 \\ 0 & \alpha^j & 0 \\ 0 & 0 & c \alpha^j \end{array}\right). $$
We know that $[1, \alpha^i p_2,c \alpha^i p_3]$ is a secant of $C_j$ if and only if $\alpha^{-j} x + p_2 y + c p_3 z=0$ has two solutions for points $[x,y,z]^T$ in $C_j$. Since the x-coordinate of such points is not zero, we are looking for solutions of the form $[1,\pm \sqrt{-\alpha^{-j}-cz^2},z]^T$. Hence, we have to find solutions of
$$z^2 - 2 \alpha^{i-j} p_3 z + \alpha^{-j} c^{-1} + \alpha^{i-j} p_3^2- \alpha^{i-2j} c^{-1}=0.$$
This quadratic equation is solvable for $z$ if and only if its discriminant is a
square in $GF(q)$, i.e.\ if and only if $\alpha^{2i-2j}p_3^2 - \alpha^{-j}c^{-1}-p_3^2 \alpha^{i-j}+\alpha^{i-2j}c^{-1}$ is a square in $GF(q)$, which is the same as $(\alpha^i p_3^2+c^{-1})(\alpha^i-\alpha^j)$ being a square in $GF(q)$. Since $[1,p_2,p_3]^T$ lies on $C_i$, we know $\alpha^i p_3^2+c^{-1}=-c^{-1}\alpha^i p_2^2$. The condition is therefore that $(-\alpha^i c^{-1})(\alpha^i-\alpha^j)$ needs to be a square, which is independent of the point $P$, and hence $C$ and $\tilde{C}$ are nested.
\end{proof}

%%%%%%%%%%%%%%%%%%%%%%%%%%%%%%%%%%%%%%%%%%%%%%%%%%%%%%%%%%%%%%%%%%%%%%%%%%%%%%%%%%%%%%%%%%%%%%%%%%%%%%%%%%%%%%%%%%%%%%%%%%%%%%%%%%%%%%%%%%%%%%%%
\subsection{The non-disjoint case}

Note that two non-disjoint proper conics can intersect in exactly one, two, three or four points. If they intersect in more than four points, they are actually the same, since any proper conic is uniquely defined by five of its points. To study the question whether or not two proper non-disjoint conics can be diagonalized simultaneously, we look at the following results, each concerned with a different number of intersection points. For all cases, we assume simultaneous diagonalization for the two proper conics we start with. Hence, in all proofs, we have to consider the following pencil:
\begin{align*}
C_1 &= \mathbb{V}( x^2+by^2+cz^2)\\
C_2 &= \mathbb{V}( x^2+\tilde{b}y^2+\tilde{c}z^2) \\
V_0 &= \mathbb{V}( (1+1)x^2+(b+\tilde{b})y^2 + (c+\tilde{c})z^2) \\
V_1 &= \mathbb{V}( (1+\alpha)x^2+(b+\alpha\tilde{b})y^2 + (c+\alpha\tilde{c})z^2 ) \\
V_2 &= \mathbb{V}( (1+\alpha^2)x^2+(b+\alpha^2\tilde{b})y^2 + (c+\alpha^2\tilde{c})z^2) \\
\vdots\\
V_{\frac{q-1}{2}} &= \mathbb{V}((b-\tilde{b})y^2 + (c-\tilde{c})z^2)\\
\vdots\\
V_{q-2} &= \mathbb{V}((1+\alpha^{q-2})x^2+(b+\alpha^{q-2}\tilde{b})y^2 + (c+\alpha^{q-2}\tilde{c})z^2) \\
\end{align*}
for $b, \tilde{b}, c, \tilde{c} \neq 0$ and $b \neq \tilde{b}$ or $c \neq \tilde{c}$.
Note that $V_{\frac{q-1}{2}}$ is a degenerate conic, since $\alpha^{\frac{q-1}{2}}=-1$. Since $b +k \tilde{b}$ and $c +k \tilde{c}$ runs through all elements of $GF(q)$ for $k=0,...,q-1$, there are exactly two or three degenerate conics in this pencil.

\begin{remark} Note that in a pencil of conics which contains more than three degenerate conics, all conics are degenerate. This can for example be seen by setting $b=0$ and $\tilde{c}=0$ in the above pencil, i.e.\ we start with two degenerate conics. Of course, $V_{\frac{q-1}{2}}$ is still a degenerate conic. To obtain more than three degenerate conics, we necessarily need $c=0$ or $\tilde{b}=0$ as well, hence all conics in the pencil are degenerate.
\end{remark}

\begin{theorem} No two proper conics $C_1$ and $C_2$ in $PG(2,q)$, which intersect in exactly one point, can be diagonalized simultaneously.
\end{theorem}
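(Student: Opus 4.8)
The plan is to argue by contradiction: suppose $C_1$ and $C_2$ are simultaneously diagonalizable yet meet in exactly one point. By the reduction set up at the start of this subsection we may take $C_1 = \mathbb{V}(x^2 + by^2 + cz^2)$ and $C_2 = \mathbb{V}(x^2 + \tilde b y^2 + \tilde c z^2)$ with $b,\tilde b, c,\tilde c \neq 0$ and $(b,c) \neq (\tilde b, \tilde c)$. First I would locate the common points: subtracting the two defining polynomials shows that every point of $C_1 \cap C_2$ lies on the degenerate conic $V_{\frac{q-1}{2}} = \mathbb{V}((b-\tilde b)y^2 + (c-\tilde c)z^2)$, and conversely any point lying on both $C_1$ and $V_{\frac{q-1}{2}}$ also lies on $C_2$ (since $x^2+\tilde b y^2+\tilde c z^2 = (x^2+by^2+cz^2) - ((b-\tilde b)y^2+(c-\tilde c)z^2)$). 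Hence $C_1 \cap C_2 = C_1 \cap V_{\frac{q-1}{2}}$, which turns the problem into counting the intersection of a proper conic with a binary quadratic form in $y,z$.

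The heart of the argument is a parity observation. Because both defining polynomials involve only $x^2, y^2, z^2$, the map $\sigma\colon [x,y,z]^T \mapsto [-x,y,z]^T$ is a collineation fixing both $C_1$ and $C_2$ setwise, hence permuting $C_1 \cap C_2$. The fixed locus of $\sigma$ is the line $x=0$ together with the point $[1,0,0]^T$; since $[1,0,0]^T$ does not satisfy $x^2+by^2+cz^2=0$, a common point is fixed by $\sigma$ only if $x=0$. Thus $\sigma$ pairs up the common points with $x \neq 0$, so their number is even. It remains to count the common points with $x = 0$, namely the $[0,y,z]^T$ satisfying both $by^2 + cz^2 = 0$ and $\tilde b y^2 + \tilde c z^2 = 0$. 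Viewing this as a linear system in $(y^2, z^2)$, either the coefficient matrix is nonsingular, forcing $y = z = 0$ and giving no point, or it is singular, in which case the points are the zeros of $by^2 + cz^2$ on the line $x = 0$, of which there are $0$ or $2$ according as $-c/b$ is a non-square or a square (never exactly one, as $-c/b \neq 0$). Either way the count with $x=0$ is even, so $|C_1 \cap C_2|$ is even and in particular never $1$, the desired contradiction.

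I expect the only delicate point to be the count of common points on the line $x = 0$, where one must rule out a single solution; this is exactly where $b,c \neq 0$ enter, guaranteeing that the relevant equation $x^2 = \mathrm{const}$ or $y^2 = \mathrm{const}\cdot z^2$ has either no solution or two. An alternative, more computational route avoiding the involution splits into cases according to whether $V_{\frac{q-1}{2}}$ is the single point $[1,0,0]^T$ (giving $|C_1 \cap C_2| = 0$), a repeated line (giving $0$ or $2$ via $x^2 = -c$ or $x^2 = -b$), or a pair of lines through $[1,0,0]^T$ (where the two lines contribute equally, giving $0$, $2$, or $4$); both routes yield that the intersection number is even. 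The same parity argument simultaneously rules out exactly three intersection points, the companion statement.
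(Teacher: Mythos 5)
Your proof is correct, but it takes a different route from the paper's own proof of this theorem. The paper argues entirely through the pencil $\mathcal{P}(C_1,C_2)$: it fixes the common point $P$, enumerates the possible shapes of the pencil (according to whether $P$ itself occurs as an element and how many degenerate conics of which type appear), and derives a contradiction in each case from the structure of the degenerate members $(1+\alpha^k)x^2+(b+\alpha^k\tilde b)y^2+(c+\alpha^k\tilde c)z^2$ --- a rather lengthy case analysis whose purpose is to illustrate the pencil technique. You instead exploit the involution $\sigma\colon [x,y,z]^T\mapsto[-x,y,z]^T$, which fixes both diagonal conics setwise, pair off the common points with $x\neq 0$, and check directly that the common points on the line $x=0$ number $0$ or $2$; this yields that $|C_1\cap C_2|$ is always even, killing the one-point and three-point cases at once. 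Your argument is complete: the fixed-locus computation, the exclusion of $[1,0,0]^T$, and the analysis of the linear system in $(y^2,z^2)$ are all handled correctly, and the hypothesis $b,c,\tilde b,\tilde c\neq 0$ (regularity of the diagonal matrices) enters exactly where you say it does. It is worth noting that the paper itself concedes this point: Remark 4.13, immediately after the theorem, sketches precisely this symmetry argument (using all sign changes $[x,\pm y,\pm z]^T$) and observes that it is ``much longer'' to go via pencils; so your proof is essentially the paper's remark promoted to a full argument, with the fixed points treated more carefully than the remark does. What the paper's longer proof buys is uniformity with the rest of Section 4, where the pencil shapes are the objects of interest; what yours buys is brevity, a uniform treatment of the odd intersection numbers, and validity over the reals as well.
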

\begin{proof} Let $C_1$ and $C_2$ intersect in exactly one point, say $P$, and assume that $C_1$ and $C_2$ can be diagonalized simultaneously. We know that all elements in the pencil $\mathcal{P}(C_1,C_2)$ must contain $P$ as well and we have to distinguish the cases of $P$ itself occurring as an element in the pencil or not. 

Let us start by assuming that $P$ itself is a conic of the pencil. Since we only have $q+1$ elements in the pencil and every point of the plane $PG(2,q)$ needs to occur in one of these, there has to be a degenerate conic corresponding to a pair of lines as well. Indeed, this gives $1+2q+(q-1)q=q^2+q+1$ different points in the $q+1$ conics of the pencil, as no point except $P$ occurs in more than one element of the pencil. As there are equally many points on a proper conic and on a line, the remaining $q-1$ conics can correspond to lines or proper conics. There are two or three degenerate conics in the pencil of $C_1$ and $C_2$, hence we have to distinguish further.

First, assume that the pencil is of the form $\{P,g\tilde{g},C_1,...,C_{q-1}\}$. In this case, we have exactly two degenerate conics and two of the elements of the pencil above must correspond to a point and a pair of lines, respectively. This means $b-\tilde{b}\neq 0$ and $c-\tilde{c}\neq 0$ as otherwise, $V_{\frac{q-1}{2}}$ would correspond to a line. As the degenerate conics are exactly two, there exists a $k$ such that $b+\alpha^k \tilde{b}=0$ and $c+\alpha^k \tilde{c}=0$. But then, the conic $V_k$ is a line, which is a contradiction.

Now, let the pencil be of the form $\{P,g\tilde{g},g',C_1,...,C_{q-2}\}$. Here, we have three degenerate conics. Assume that $V_{\frac{q-1}{2}}$ is a line. Since each zero of $x^2+ty^2$ corresponds to a point or a pair of lines, depending on $t\neq 0$, we need $b=\tilde{b}$ or $c=\tilde{c}$ to obtain the line $\mathbb{V}(z^2)$ or $\mathbb{V}(y^2)$. In both cases, there can only be one more degenerate conic, which is a contradiction. So $V_{\frac{q-1}{2}}$ must correspond to the point or the pair of lines. In both cases, we have $b\neq\tilde{b}$ and $c\neq\tilde{c}$. Since we need three degenerate conics, there exists a $k$ such that $b+\alpha^k\tilde{b}=0$ and a $k'$ with $c+\alpha^{k'}\tilde{c}=0$. For $k\neq k'$, the conics $V_k$ and $V_{k'}$ both correspond to a point or a pair of lines. As $k=k'$ leads to only one more degenerate conic corresponding to a line, we have a contradiction.

Now assume that $P$ does not occur as an element of the pencil. In this case, assume we have the pencil $\{g_1,g_2,C_1,...,C_{q-1}\}$ or $\{g_1,g_2,g_3,C_1,...,C_{q-2}\}$. This means that $V_{\frac{q-1}{2}}$ must correspond to a line, i.e.\ $b=\tilde{b}$ or $c=\tilde{c}$. This gives only one more degenerate conic, which corresponds to a point or a pair of lines, hence we cannot produce the pencils we assumed.
\end{proof}

\begin{remark} \label{Remark1} Note that the same result is true in the real projective plane as well. A shorter proof for that is to look at two proper conics in diagonal form, namely $C= \mathbb{V}(x^2 +by^2+cz^2)$ and $\tilde{C}=\mathbb{V}(x^2 + \tilde{b}y^2+\tilde{c}z^2)$. Note that if $P=[x,y,z]^T$ lies on $C$ and $\tilde{C}$, then so do $[x,-y,z]^T$, $[x,y,-z]^T$ and $[x,-y,-z]^T$. Since $[1,0,0]^T$, $[0,1,0]^T$ and $[0,0,1]^T$ do not lie on a proper conic of this pencil, we obtain exactly two or four intersection points. This argument holds for the real projective plane as well as for $PG(2,q)$. For this reason, we expect simultaneous diagonalization to fail for conics with exactly one or exactly three common points. The proof above shows that these results can be obtained by considering the pencil of conics as well, although the proof is much longer than the simple argument just mentioned.
\end{remark}

\begin{lemma} Two proper conics $C_1$ and $C_2$ which intersect in exactly two points can be diagonalized simultaneously if and only if their pencil is of the form
$$ \{g_1\tilde{g_1},g_2,C_1,C_2,...,C_{q-1}\}.$$
\end{lemma}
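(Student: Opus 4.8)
The plan is to prove both implications with a single combinatorial point count, supplied by the partition behaviour from Lemma \ref{partition1} and Remark \ref{PlusVar}, and to settle the ``if'' direction by an explicit diagonalising collineation in the spirit of the proof of Theorem \ref{DiagDisj}. Throughout write $R_1,R_2$ for the two common points of $C_1$ and $C_2$. By Remark \ref{PlusVar} both lie in every member of $\mathcal{P}(C_1,C_2)$, while by Lemma \ref{partition1} every other point of $PG(2,q)$ lies in exactly one member. Hence $\mathcal{P}(C_1,C_2)$ partitions the $q^2+q-1$ points of $PG(2,q)\setminus\{R_1,R_2\}$, each member $W$ contributing $|W|-2$ of them.

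For the ``only if'' direction I would first pass, via the assumed simultaneous diagonalisability, to the diagonal pencil displayed above, which has either two or three degenerate members; since the number and type of degenerate conics, the number of common points, and the partition count are all invariant under collineations, it suffices to read the shape off the count. A single point cannot contain the two distinct base points $R_1,R_2$, so no member is a point. Writing $a$ for the number of members that are pairs of lines (each contributing $2q-1$) and noting that both single lines and proper conics contribute $q-1$, the partition identity
$$ a(2q-1) + (q+1-a)(q-1) = q^2+q-1 $$
forces $a=1$. Finally, two distinct single lines would both pass through $R_1$ and $R_2$ and hence coincide, so there is at most one single line; combined with the fact that the diagonal pencil has two or three degenerate members and exactly one of them is a pair of lines, this forces exactly one single line. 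Thus the pencil is exactly $\{g_1\tilde{g_1},g_2,C_1,\dots,C_{q-1}\}$.

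For the ``if'' direction I would start from the hypothesised shape and exhibit the diagonalising map directly. The base points $R_1,R_2$ lie on the line $g_2$ and on the pair of lines $g_1\tilde{g_1}$; they cannot both lie on the same component (else that component would be the line $R_1R_2=g_2$), so after relabelling $R_1\in g_1$, $R_2\in\tilde{g_1}$, and the vertex $Q=g_1\cap\tilde{g_1}$ is off $g_2$, whence $R_1,R_2,Q$ are not collinear. By Lemma \ref{quadrilateral} I choose a collineation sending $R_1\mapsto[1,1,0]^T$, $R_2\mapsto[1,-1,0]^T$, $Q\mapsto[0,0,1]^T$; it then carries $g_1\tilde{g_1}$ to $\mathbb{V}(x^2-y^2)$ and $g_2$ to $\mathbb{V}(z^2)$, both diagonal. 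Since a collineation acts linearly on the defining polynomials, the whole pencil is mapped to the diagonal pencil $\{\,\mathbb{V}(\lambda(x^2-y^2)+\mu z^2)\,\}$; in particular the images of $C_1$ and $C_2$ are diagonal, which is exactly simultaneous diagonalisability.

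The routine parts (the point count and the check that the chosen collineation sends the two degenerate members to $\mathbb{V}(x^2-y^2)$ and $\mathbb{V}(z^2)$) are easy. The step carrying the real weight is the ``only if'' direction: the count alone yields only $a=1$ and the absence of point-members, and is equally consistent with the claimed shape and with the line-free shape $\{g_1\tilde{g_1},C_1,\dots,C_q\}$. What singles out the diagonalisable case is precisely that a \emph{diagonal} pencil must contain two or three degenerate members, which forces the extra single line $g_2$ to appear. So the crux is to make sure this ``two or three degenerate conics'' input is genuinely available (it is, from the displayed diagonal pencil) and to combine it correctly with the bound of at most one single line.
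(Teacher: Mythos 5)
Your proposal is correct and follows essentially the same route as the paper: the same point count (no point-members, exactly one pair of lines, at most one line through the two base points) reduces to the two candidate shapes, the ``two or three degenerate members of a diagonal pencil'' fact rules out the line-free shape for the ``only if'' direction, and the ``if'' direction is settled by the same collineation sending the two base points and the vertex of the line pair to standard positions so that the degenerate members, and hence the whole pencil, become diagonal. The only differences are cosmetic (counting $q^2+q-1$ points off the base points rather than all $q^2+q+1$, and normalising the base points to $[1,\pm 1,0]^T$ instead of $[1,\pm\alpha^k,0]^T$).
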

\begin{proof} Let $C_1$ and $C_2$ intersect in exactly two points $P$ and $Q$. None of the elements of the pencil can correspond to a point, since both $P$ and $Q$ lie in all elements of the pencil. Moreover, since every point of the plane except $P$ and $Q$ needs to be in exactly one element of the pencil, we need one pair of lines, since we need $(2q+1)+(q-1)q=q^2+q+1$ points in total. As there is exactly one line through $P$ and $Q$, we only have two possible forms for the pencil, namely $ \{g_1\tilde{g_1},g_2,C_1,C_2,...,C_{q-1}\}$ or $ \{g_1\tilde{g_1},C_1,C_2,...,C_q\}$. The second pencil has only one degenerate conic, so simultaneous diagonalization is not possible in this case, which shows one direction of our claim. 

For the other direction, let the pencil be of the form $\{ g_1\tilde{g_1},g_2,C_1,C_2,...,C_{q-1}\}.$ Note that for two diagonal conics, if they intersect in a point $[1,y,z]^T$ they intersect as well in the points $[1,-y,z]^T$, $[1,y,-z]^T$ and $[1,-y,-z]^T$, which gives potentially four intersection points. Hence, if the two conics we started with are diagonalizable simultaneously and intersect in exactly two points, they have to intersect in a point with a zero coordinate, e.g. $[1,\alpha^k,0]^T$, which gives only one more intersection point. As we can transform any three non-collinear points to any other three non-collinear points, we search for a transformation $\phi_S$ such that $\phi_S(P)=[1,\alpha^k,0]^T$, $\phi_S(Q)=[1,-\alpha^k,0]^T$ and $\phi_S(g_1\cap\tilde{g_1})=[0,0,1]^T$. Note that the intersection of the pair of lines, i.e.\ $g_1 \cap \tilde{g_1}$ cannot be $P$ or $Q$, since then $g_1$ or $\tilde{g_1}$ would be the line through $P$ and $Q$, namely $g_2$, and the elements $g_1\tilde{g_1}$ and $g_2$ would have $q+1$ common points. So, we indeed obtain, without loss of generality,
$g_2=\mathbb{V}(z^2)$ and $g_1\tilde{g_1}=\mathbb{V}(\alpha^{2k} x^2-y^2)$. All elements of the pencil intersect in exactly two points, namely $[1,\alpha^k,0]^T$ and $[1,-\alpha^k,0]^T$ and hence the two conics we started with are mapped into two diagonal conics, which intersect in exactly those two points as well.
\end{proof}

\begin{lemma} Two proper conics $C_1$ and $C_2$ which intersect in exactly three points, cannot be diagonalized simultaneously.
\end{lemma}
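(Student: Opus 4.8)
The plan is to assume that $C_1$ and $C_2$ are simultaneously diagonalizable and derive a contradiction from the fact that, once both are in diagonal form, their intersection is forced to contain an even number of points. This is precisely the observation flagged in Remark \ref{Remark1}, and I would turn it into a clean proof as follows.

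First I would reduce to diagonal forms. If $C_1$ and $C_2$ can be diagonalized simultaneously, there is a collineation $\phi_S$ carrying them to $C := \mathbb{V}(x^2 + by^2 + cz^2)$ and $\tilde{C} := \mathbb{V}(x^2 + \tilde{b}y^2 + \tilde{c}z^2)$, with $b, c, \tilde{b}, \tilde{c} \neq 0$ (nonzero because the conics are proper, hence their matrices regular) and $(b,c)\neq(\tilde{b},\tilde{c})$ (because the two conics are distinct). Since collineations are bijective and map conics to conics, $|C \cap \tilde{C}| = |C_1 \cap C_2| = 3$, so it is enough to show that two such diagonal conics can never meet in an odd number of points.

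Next I would exploit the symmetry group. Let $s_y \colon [x,y,z]^T \mapsto [x,-y,z]^T$ and $s_z \colon [x,y,z]^T \mapsto [x,y,-z]^T$; these are collineations, they generate a Klein four-group $G = \{\mathrm{id}, s_y, s_z, s_y s_z\}$, and each element fixes the diagonal polynomials defining $C$ and $\tilde{C}$. Hence $G$ preserves $C \cap \tilde{C}$, which is therefore a disjoint union of $G$-orbits. The key computation is that every orbit meeting $C \cap \tilde{C}$ has even size. Working in odd characteristic (so $y = -y$ forces $y = 0$), one checks that the only points with a one-element orbit are the three coordinate points $[1,0,0]^T, [0,1,0]^T, [0,0,1]^T$, and none of these lies on $C$, since they would force a vanishing diagonal coefficient, contradicting $b, c \neq 0$. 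Any other point has at least two nonzero coordinates, and a short case distinction shows its orbit has size $2$ or $4$. Summing orbit sizes, $|C \cap \tilde{C}|$ is even.

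The conclusion is then immediate: $|C \cap \tilde{C}| = 3$ is impossible, so the assumption fails and no two proper conics meeting in exactly three points are simultaneously diagonalizable. The one step needing care — the \emph{main obstacle} — is the orbit bookkeeping for points with $x = 0$: there $s_y s_z$ acts trivially (as $[0,-y,-z]^T = [0,y,z]^T$), so a point $[0,y,z]^T$ with $y,z \neq 0$ has orbit size $2$ rather than $4$, and one must confirm that no admissible intersection point ever yields an odd orbit. For consistency with the earlier proofs I would also note the alternative pencil route: since three common points of two proper conics are necessarily non-collinear (a line meets a conic at most twice), Remark \ref{PlusVar} places all three in every member of $\mathcal{P}(C_1,C_2)$, and a point-count together with the classification of the degenerate members (each either a single point or a pair of lines through a coordinate vertex) rules out every admissible shape; this argument is longer, so the symmetry proof above is the one I would present.
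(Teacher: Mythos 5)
Your proof is correct, but it takes a genuinely different route from the paper's own proof of this lemma. The paper argues entirely through the pencil: it first shows that none of the degenerate members of $\mathcal{P}(C_1,C_2)$ can be a point or a single line (the three common points are non-collinear), then solves the point-count $3+i(2q-2)+(q+1-i)(q-2)=q^2+q+1$ to conclude that the pencil is always of the shape $\{g_1\tilde{g_1},g_2\tilde{g_2},C_1,\dots,C_{q-1}\}$ with exactly two line-pairs, and finally checks that the standard pencil of two diagonal conics can never realize this shape (two distinct degenerate members would force a third one which is a single line). Your argument instead exploits the Klein four-group of sign changes $y\mapsto -y$, $z\mapsto -z$ preserving any diagonal conic, and the orbit bookkeeping you give is accurate: the only singleton orbits are the coordinate points, which cannot lie on a proper diagonal conic since $b,c,\tilde{b},\tilde{c}\neq 0$, and every other orbit has size $2$ or $4$ (including the size-$2$ orbits on $x=0$ that you correctly flag), so $|C\cap\tilde{C}|$ is even and cannot equal $3$. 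This is essentially the "shorter proof" the author herself sketches in Remark \ref{Remark1} but deliberately does not use, preferring the pencil computation for uniformity with the rest of the section. What the paper's longer route buys is the explicit classification of the pencil shape for three intersection points, which feeds into the summary table; what your route buys is brevity, a cleaner conceptual reason (parity of orbits), and the bonus that the same argument simultaneously rules out one intersection point. Your closing sketch of the pencil alternative matches the paper's actual strategy, so you have in effect identified both proofs.
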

\begin{proof} Let $C_1$ and $C_2$ intersect in the points $P$, $Q$ and $R$. Note that none of the conics in the pencil corresponds to a point. Moreover, none of the conics corresponds to a line, as $P$, $Q$ and $R$ are not collinear by the definition of a proper conic. Hence, the only degenerate conics which occur correspond to pairs of lines. The question therefore is how many such pairs there are and if there is only one possible pencil. For this, note that on a pair of lines, there are always $2q+1$ points. As all conics in question intersect in the same three points, we have to be careful not to count any point too often. So, we have to solve the following equation:
$$ 3 + i (2q-2) + (q+1-i)(q-2) = q^2+q+1,\ 0 \leq i \leq 3 $$
which gives immediately $i=2$, so there are always two degenerate conics which correspond to a pair of lines. Hence, the pencil of two proper conics which intersect in three points is always of the form $\{g_1\tilde{g_1},g_2\tilde{g_2},C_1,...,C_{q-1}\}$. 

Therefore, the conic $V_{\frac{q-1}{2}}$ corresponds to a pair of lines, which gives $b\neq\tilde{b}$ and $c\neq\tilde{c}$. There is only one more degenerate conic, hence there exists a $k$ such that $b+\alpha^k\tilde{b}=0$ and $c+\alpha^k\tilde{c}=0$. But then $V_k$ corresponds to a line, which is a contradiction.
\end{proof}

\begin{remark} We already mentioned in Remark \ref{Remark1} that this happens in the real projective plane as well.
\end{remark}

\begin{lemma} Two proper conics $C_1$ and $C_2$ which intersect in exactly four points can always be diagonalized simultaneously.
\end{lemma}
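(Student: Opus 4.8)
The plan is to exploit the fundamental theorem of projective geometry (Lemma \ref{quadrilateral}) to move the four common points into a highly symmetric position that forces \emph{every} conic through them to be diagonal, so that $C_1$ and $C_2$ become diagonal simultaneously. First I would record that the four intersection points $P_1,P_2,P_3,P_4$ are in general position: since they all lie on the proper conic $C_1$ and a line meets a proper conic in at most two points, no three of them are collinear. Hence $\{P_1,P_2,P_3,P_4\}$ is an admissible quadrilateral for Lemma \ref{quadrilateral}.

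Next I would choose as target the symmetric quadrilateral $Q_1=[1,1,1]^T$, $Q_2=[1,-1,1]^T$, $Q_3=[1,1,-1]^T$, $Q_4=[1,-1,-1]^T$. For $q$ odd these four points are distinct, and a direct check shows that each relevant $3\times 3$ determinant equals $\pm 4$, which is nonzero in odd characteristic, so no three of the $Q_i$ are collinear and the target is again admissible. By Lemma \ref{quadrilateral} there is a collineation $\phi_S$ with $\phi_S(P_i)=Q_i$ for $i=1,2,3,4$.

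The key step is to show that every conic through $Q_1,Q_2,Q_3,Q_4$ is diagonal. Writing a general conic as $ax^2+by^2+cz^2+dxy+exz+fyz$ and substituting the four points yields four linear relations; subtracting them in pairs cancels the squared terms and isolates the mixed coefficients, giving $d+f=0$, $e+f=0$ and $d+e=0$. In odd characteristic these force $d=e=f=0$, so the conic is diagonal. Applying this to the images $\phi_S(C_1)$ and $\phi_S(C_2)$, which both pass through the $Q_i$, shows that both are diagonal. Recalling from the preamble of this subsection that $\phi_S$ acts on the matrix representation $A$ of a conic by $A\mapsto (S^{-1})^T A S^{-1}$, the matrix $T:=S^{-1}$ simultaneously diagonalizes the matrices of $C_1$ and $C_2$, as required.

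The argument has essentially no obstacle once the target is chosen correctly; the only delicate point is the choice of $Q_1,\dots,Q_4$. The naive choice $[1,0,0]^T,[0,1,0]^T,[0,0,1]^T,[1,1,1]^T$ from Lemma \ref{CountPoints} fails, since conics through those points generally retain mixed terms. The symmetric configuration $[1,\pm 1,\pm 1]^T$ is precisely the one compatible with the sign-flip symmetry $(y,z)\mapsto(\pm y,\pm z)$ enjoyed by diagonal conics, and it is exactly this symmetry that makes the mixed coefficients vanish.
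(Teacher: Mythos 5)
Your proof is correct and takes essentially the same approach as the paper: both normalize the four intersection points to the symmetric quadrilateral $[1,\pm 1,\pm 1]^T$ via Lemma \ref{quadrilateral} and conclude that the images of $C_1$ and $C_2$ are diagonal. The only difference is that the paper first determines the shape of the pencil (counting that exactly three pairs of lines occur) and then lists it explicitly, whereas you verify directly from the four linear relations that every conic through these points has $d=e=f=0$; your route is, if anything, a little more streamlined since the pencil structure is not actually needed for the conclusion.
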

\begin{proof} In the pencil of two proper conics $C_1$ and $C_2$ which intersect in exactly four points, the only possible degenerate conics correspond to pairs of lines by the same argument as before. Again, we have to think about the number $i$ of pairs of lines needed to ensure that every point of the plane is in an element of this pencil. For this, we have to solve
$$ 4 + i (2q-3) + (q+1-i)(q-3) = q^2+q+1 $$
which gives us $i=3$. Hence, the pencil of two proper conics which intersect in exactly four points is always given by $\{g_1\tilde{g_1},g_2\tilde{g_2},g_3\tilde{g_3},C_1,...,C_{q-2}\}$. These four intersection points are such that no three are on one line and hence by Lemma \ref{quadrilateral}, we can choose those points as:
$$ P =[1,1,1]^T,\ Q=[1,-1,1]^T,\ R=[1,1,-1]^T,\ S=[1,-1,-1]^T $$
There are exactly three different pairs of lines through these four points $P$, $Q$, $R$ and $S$, given by:
$$ g_1\tilde{g_1} = \mathbb{V}( y^2-z^2),\  g_2\tilde{g_2}=\mathbb{V}( x^2-y^2),\  g_3\tilde{g_3}= \mathbb{V}( x^2-z^2)$$
This leads to the following pencil:
\begin{align*}
g_1\tilde{g_1}&= \mathbb{V}( y^2-z^2 ) \\
g_2\tilde{g_2}&= \mathbb{V}( x^2-y^2) \\
g_3\tilde{g_3} &= \mathbb{V}(x^2-z^2) \\
C_1&= \mathbb{V}( x^2+(\alpha-1)y^2-\alpha z^2) \\
C_2&= \mathbb{V}( x^2+(\alpha^2-1)y^2-\alpha^2 z^2 ) \\ 
\vdots\\
C_{q-2}&= \mathbb{V}( x^2+(\alpha^{q-2}-1)y^2-\alpha^{q-2} z^2 ) \\
\end{align*}
Therefore, the two conics we started with are in diagonal form as well.
\end{proof}

\begin{remark} In the real projective plane, two conics which intersect in exactly four points can always be diagonalized simultaneously as well. This can for example be shown by proving the existence of a triangle which is self-polar with respect to both conics (see for example \cite{MR2305055}).
\end{remark}

\section{Summary}

To finish the discussion about simultaneous diagonalization of two symmetric $3 \times 3$ matrices over $GF(q)$ using pencils of conics, we summarize our results in the following table.

\begin{center}
	\begin{tabular} {c|c|c}
	\textbf{Case} & \textbf{Pencil} & \textbf{diagonalizable?} \\
	\hline
	$|C_1\cap C_2|=0$ & $\{P,g,C_1,...,C_{q-1}\}$ & yes\\
	                  & $\{P,\tilde{P},g\tilde{g},C_1,...,C_{q-2}\}$ & yes\\
										& $\{P,C_1,...,C_q\}$ & no\\
	\hline
	$|C_1\cap C_2|=1$ & $\{P,g\tilde{g},C_1,C_2,C_3/g_3,...,C_{q-1}/g_{q-1}\}$ & no\\
	                  & $\{C_1,C_2,C_3/g_3,...,C_{q+1}/g_{q+1}\}$ & no\\
	\hline
	$|C_1\cap C_2|=2$ & $\{g\tilde{g},g',C_1,...,C_{q-1}\}$ & yes\\
	                  & $\{g\tilde{g},C_1,...,C_q\}$ & no\\
	\hline
	$|C_1\cap C_2|=3$ & $\{g_1\tilde{g_1},g_2\tilde{g_2},C_1,...,C_{q-1}\}$ & no\\
	\hline
	$|C_1\cap C_2|=4 $& $\{g_1\tilde{g_1},g_2\tilde{g_2},g_3\tilde{g_3},C_1,...,C_{q-2}\}$ & yes\\
	 \end{tabular}
\end{center}

\section{Acknowledgements}
I would like to thank Norbert Hungerb\"uhler, G\'abor Korchm\'aros and Lukas Parapatits for helpful comments on this paper.

%%%%%%%%%%%%%%%%%%%%%%%%%%%%%%%%%%%%%

\end{document}